\newtheorem{thm}{Theorem}[section]
\newtheorem{pro}[thm]{Proposition}
\newtheorem{cor}[thm]{Corollary}
\newtheorem{lem}[thm]{Lemma}
\newtheorem{df}[thm]{Definition}
\newtheorem{rem}[thm]{Remark}
\newtheorem{con}[thm]{Conjecture}
\newcommand{\la}{\langle}
\newcommand{\ra}{\rangle}
\newcommand{\N}{\omega}
\newcommand{\R}{\mathbb{R}}
\renewcommand{\iff}{\leftrightarrow}
\newcommand{\mc}{\mathcal}
\newcommand{\inter}{\cap}
\renewcommand{\to}{\rightarrow}
\newcommand{\restrict}{\upharpoonright}
\newcommand{\eps}{\varepsilon}
\newcommand{\nil}{\varnothing}
\renewcommand{\P}{\mathbb{P}}
\newcommand{\E}{\mathbb{E}}
\newcommand{\upto}{\upharpoonright}
\newcommand{\nilstr}{\langle\,\rangle}
\DeclareMathOperator{\wt}{wt}
\DeclareMathOperator{\Member}{\textsc{Member}}
\DeclareMathOperator{\Dim}{dim}
\begin{document}

\title{Martin-L\"of randomness and Galton--Watson processes}
\author{David Diamondstone}
	\address{Department of Mathematics, University of Chicago, Chicago IL 60615}
	\email{ded@math.uchicago.edu}
\author{Bj\o rn Kjos-Hanssen}
	\address{Department of Mathematics, University of Hawai{\textquoteleft}i at M{\=a}noa, Honolulu HI 96822}
	\email{bjoern@math.hawaii.edu}

\begin{abstract}
	The members of Martin-L\"of random closed sets under a distribution studied by Barmpalias et al.\ are exactly the infinite paths through Martin-L\"of random Galton--Watson trees with survival parameter $\frac{2}{3}$. To be such a member, a sufficient condition is to have effective Hausdorff dimension strictly greater than $\gamma=\log_2 \frac{3}{2}$, and a necessary condition is to have effective Hausdorff dimension greater than or equal to $\gamma$.

	\noindent\textbf{Keywords:} random closed sets, computability theory.
\end{abstract}

\maketitle
\tableofcontents
\section{Introduction}

Classical probability theory studies intersection probabilities for random sets. A random set will intersect a given deterministic set if the given set is large, in some sense. Here we study a computable analogue: the question of which real numbers are ``large'' in the sense that they belong to some Martin-L\"of random closed set.

Doug Cenzer put together a group of researchers, including some of his students at the University of Florida, which we will refer to as the Florida group. They introduced algorithmic randomness for closed sets in the paper \cite{BBCDW}. Subsequently Kjos-Hanssen \cite{K:09} used algorithmically random Galton--Watson trees to obtain results on infinite subsets of random sets of integers. Here we show that the distributions studied by the Florida group and by Galton and Watson are actually equivalent, not just classically but in an effective sense.

For $0\le\gamma<1$, let us say that a real $x$ is a $\Member_{\gamma}$ if $x$ belongs to some Martin-L\"of random (ML-random) closed set according to the Galton--Watson distribution (defined below) with survival parameter $p=2^{-\gamma}$. We show that for $p=\frac{2}{3}$, this is equivalent to $x$ being a member of a Martin-L\"of random closed set according to the distribution considered by the Florida group.

In light of this equivalence, we may state that
\begin{enumerate}
	\item[(i)] the Florida group showed that in effect not every $\Member_\gamma$ is ML-random, and
	\item[(ii)] Joseph S.\ Miller and Antonio Mont\'alban showed that every ML-random real is a $\Member_\gamma$; the proof of their result is given in the paper of the Florida group \cite{BBCDW}.
\end{enumerate}
The way to sharpen these results goes via \emph{effective Hausdorff dimension}. Each ML-random real has {effective Hausdorff dimension} equal to one. In Section \ref{sec3} we show that
\begin{enumerate}
	\item[(i$'$)] a $\Member_\gamma$ may have effective Hausdorff dimension strictly less than one, and
	\item[(ii$'$)] every real of sufficiently large effective Hausdorff dimension (where some numbers strictly less than one are ``sufficiently large'') is a $\Member_\gamma$.
\end{enumerate}

\section{Equivalence of two models}\label{Nanjing}

We write $\Omega=2^{<\omega}$, and $2^\omega$, for the sets of finite and infinite strings over $2=\{0,1\}$, respectively.
If $\sigma\in\Omega$ is an initial substring (a prefix) of $\tau\in\Omega$ we write $\sigma\preceq\tau$; similarly $\sigma\prec x$ means that the finite string $\sigma$ is a prefix of the infinite string $x\in 2^\omega$. The length of $\sigma$ is $|\sigma|$. Concatenation of strings $\sigma$, $\tau$ is written $\sigma\tau$ or $\sigma^\frown\tau$, the empty string is written ${\nilstr}$ and strings of length one are written $\la i\ra$ or simply $i$, where $i=0,1$. We use the standard notation $[\sigma]=\{x: \sigma\prec x\}$, and for a set $U\subseteq\Omega$, $[U]^\preceq:=\bigcup_{\sigma\in U} [\sigma]$.
 Let $\mc P$ denote the power set operation, $\mc P(X)=2^X$. For a real number $0\le\gamma<1$, $\lambda_{1,\gamma}$ denotes the distribution with sample space $\mc P(\Omega)$ such that for each $\sigma\in\Omega$,
\[
	\lambda_{1,\gamma}(\{S:\sigma\in S\})=2^{-\gamma},
\]
 and the events $\{S:\sigma\in S\}$ are mutually independent for distinct $\sigma$. Let $\lambda^*_\gamma$ be the distribution with sample space $\mc P(\Omega)$ such that for each $J$, writing $p=2^{-\gamma}$,
\[
	\lambda^*_\gamma(\{S: S\cap \{\sigma0,\sigma 1\}=J\}=
	\begin{cases}
		{1-p} 	&	\text{if } J=\{\sigma 0\} \text{ or }J=\{\sigma 1\}\text{;} \\
		2p-1	&	\text{if }J=\{\sigma 0, \sigma 1\},
	\end{cases}
\]
and the events $\{S: S\cap \{\sigma0,\sigma 1\}=J\}$ are mutually independent for distinct $\sigma$. {The notation $\lambda_{1,\gamma}$ is consistent with earlier usage \cite{K:09} and is also easy to distinguish visually from $\lambda^*_\gamma$. } The closed set $\Gamma_{S}$ determined by $S\subseteq\Omega$ is defined by
\[
	\Gamma_S=\{x\in 2^\omega: (\forall \sigma\prec x)\,\sigma\in S\}.
\]
The \emph{Galton--Watson (GW) distribution for survival parameter $2^{-\gamma}$}, also known as the $(1,\gamma)$-induced distribution \cite{K:09}, and as the distribution of a \emph{percolation limit set} \cite{MP},
 is a distribution $\mathbb P_{1,\gamma}$ on the set of all closed subsets of $2^\omega$ defined by
\[
	\mathbb P_{1,\gamma}(\{\Gamma:\Gamma\in E\})= \lambda_{1,\gamma}\{ S: \Gamma_{S}\in E\}.
\]
Thus, the probability of a property $E$ of a closed subset of $2^\omega$ is the probability according to $\lambda_{1,\gamma}$ that a random subset of $\Omega$ determines a tree whose set of infinite paths has property $E$.
Similarly, let
\[
	\mathbb P^*_\gamma(\{\Gamma: \Gamma\in E\})= \lambda^*_\gamma\{ S: \Gamma_{S}\in E\}.
\]
A \emph{$\Sigma^0_1$ subset of $\mathcal P(\Omega)$} is the image of a $\Sigma^0_1$ subset of $\mathcal P(\omega)=2^\omega$ via an effective isomorphism between $\Omega$ and $\omega$.

\begin{df}
	[Martin-L\"of randomness]
	A set of strings $S\in\mathcal P(\Omega)$ is called \emph{$\lambda_{1,\gamma}$-ML-random} if for each uniformly $\Sigma^0_1$ sequence $\{U_n\}_{n\in\omega}$ of subsets of $\mathcal P(\Omega)$ with $\lambda_{1,\gamma}(U_n)\le 2^{-n}$, we have $S\not\in\bigcap_n U_n$.

	A closed set $\Gamma$ is called \emph{$\P_{1,\gamma}$-ML-random} if $\Gamma=\Gamma_S$ for some $\lambda_{{1,\gamma}}$-ML-random set of strings $S$.

	A set of strings $S\in\mathcal P(\Omega)$ is called \emph{$\lambda^*_{\gamma}$-ML-random} if for each uniformly $\Sigma^0_1$ sequence $\{U_n\}_{n\in\omega}$ of subsets of $\mathcal P(\Omega)$ with $\lambda^*_{\gamma}(U_n)\le 2^{-n}$, we have $S\not\in\bigcap_n U_n$.

	A closed set $\Gamma$ is called \emph{$\P^{*}_{\gamma}$-ML-random} if $\Gamma=\Gamma_S$ for some $\lambda^{*}_{\gamma}$-ML-random set of strings $S$.
\end{df}

\begin{lem}
	[Axon \cite{Axon}]
	\label{Logan}

	Let $2^{-\gamma}=\frac{2}{3}$. A closed set $\Gamma\subseteq 2^\omega$ is $\P^*_\gamma$-ML-random if and only if $\Gamma$ is a Martin-L\"of random closed set under the Florida distribution.
\end{lem}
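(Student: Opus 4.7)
The two distributions induce the same classical probability law on closed subsets of $2^\omega$: at each surviving node $\sigma$ one makes an independent ternary choice ``$\sigma 0$ only,'' ``$\sigma 1$ only,'' or ``both $\sigma 0$ and $\sigma 1$,'' each with probability $\tfrac{1}{3}$. The difference between $\lambda^*_\gamma$ and the Florida distribution is bookkeeping: $\lambda^*_\gamma$ records such a ternary decision at \emph{every} $\sigma\in\Omega$, including nodes lying off the survival tree, whereas the Florida sample space only records decisions at nodes along the surviving tree. The plan is to exhibit an effective measure-preserving projection between the two sample spaces and to use a van Lambalgen style argument to transfer Martin-L\"of randomness in both directions.

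First I will define the projection $\Phi\colon\mathcal{P}(\Omega)\to\mathcal{P}(\Omega)$ by
\[
	\Phi(S)=\{\sigma\in\Omega:(\forall\tau\preceq\sigma)\,\tau\in S\},
\]
the subtree of $S$ consisting of strings reachable from the root. The map $\Phi$ is computable and satisfies $\Gamma_{\Phi(S)}=\Gamma_S$, and by independence of the coordinates its pushforward $\Phi_*\lambda^*_\gamma$ agrees with the Florida distribution on the space of pruned subtrees.

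For the forward direction, any uniformly $\Sigma^0_1$ null test $\{V_n\}$ for the Florida distribution pulls back via $\Phi$ to a uniformly $\Sigma^0_1$ null test $\{\Phi^{-1}(V_n)\}$ of the same measure for $\lambda^*_\gamma$. Hence if $S$ is $\lambda^*_\gamma$-ML-random, then $\Phi(S)$ is Florida-ML-random, and $\Gamma_S=\Gamma_{\Phi(S)}$ is an ML-random closed set under the Florida distribution.

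For the reverse direction, suppose $T$ is a Florida-ML-random sample with $\Gamma_T=\Gamma$. I will reconstruct a $\lambda^*_\gamma$-ML-random $S$ with $\Phi(S)=T$ by independently labeling the nodes of $\Omega$ lying outside $T$. Choosing a labeling $P$ of $\Omega\setminus T$ drawn from the appropriate product of $(\tfrac{1}{3},\tfrac{1}{3},\tfrac{1}{3})$ distributions and ML-random relative to $T$, I will let $S$ agree with $T$ on the tree and with $P$ off the tree. A van Lambalgen style argument then yields that $S$ is $\lambda^*_\gamma$-ML-random. I expect this step to be the main obstacle, since the index set on which $P$ is defined depends on $T$ rather than being fixed in advance, so the usual statement of van Lambalgen's theorem does not apply off the shelf. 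However, because the tree/non-tree partition is effectively computable from $T$, the standard Fubini-on-$\Sigma^0_1$-sets proof of van Lambalgen adapts to this parameterized product setting and completes the argument.
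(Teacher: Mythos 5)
The paper does not actually prove Lemma~\ref{Logan}; it is stated as a citation to Axon's dissertation, so there is no proof in the paper to compare against directly. That said, your proof sketch is correct, and it is worth noting that your strategy mirrors almost exactly the machinery the paper develops for Theorem~\ref{random}: there, a ``minimal'' tree representation ($G_\infty$, $\mu_i$) is fattened out to a ``full'' representation ($G$, $\mu_c$) by overlaying independent noise ($\lambda_f$) via a map $\psi$, a pushforward computation shows $\psi$ is measure preserving, and a generalized van Lambalgen argument transfers ML-randomness both ways. Your $\Phi$ plays the role of the projection in one direction, and your $(T,P)\mapsto S$ construction plays the role of $\psi$ in the other.

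The obstacle you flag at the end --- that the index set of $P$ depends on $T$ so ``van Lambalgen does not apply off the shelf'' --- can be dissolved rather than worked around. Simply take $P$ to range over the full product $3^{\Omega}$ (an independent uniform ternary label at \emph{every} $\sigma\in\Omega$), so that $(T,P)$ lives in a fixed computable product probability space $3^{\omega}\times 3^{\Omega}$. Define the combination map $\Psi(T,P)$ to read $T$'s choice at $\sigma$ when $\sigma$ lies on the tree coded by $T$, and $P_\sigma$ otherwise, discarding the on-tree coordinates of $P$. Then $\Psi$ is a total computable map on a fixed product space, $\Phi(\Psi(T,P))=T$ for all $P$, and one checks by a level-by-level (conditional/martingale) argument that $\Psi$ pushes the product measure forward to $\lambda^*_\gamma$ --- this is the same style of verification the paper carries out for its $\psi$. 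Now the standard generalized van Lambalgen theorem applies verbatim: $T$ ML-random and $P$ ML-random relative to $T$ give $(T,P)$ product-ML-random, and computable measure-preserving maps preserve ML-randomness, so $S=\Psi(T,P)$ is $\lambda^*_\gamma$-ML-random with $\Gamma_S=\Gamma_T$.

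One small point you should not wave away: your claim that ``by independence of the coordinates its pushforward $\Phi_*\lambda^*_\gamma$ agrees with the Florida distribution'' needs a sentence more. The canonical ternary coding of the pruned tree reads off choices at an \emph{adaptive} set of nodes (which nodes are in the tree depends on earlier choices), so the equality of measures is not literally coordinate-wise independence; it follows from the observation that, revealing nodes level by level, whether $\sigma$ is in the tree is determined by choices at strict ancestors, and conditionally on that the choice at $\sigma$ is still fresh uniform ternary. This is exactly the same subtlety present in verifying measure preservation for $\Psi$, and is handled the same way.
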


A probability space $(M,\mc M,\mu)$ consists of a set $M$, a $\sigma$-algebra $\mc M$ on $M$, and a measure $\mu$ defined on each set in $\mc M$. For each probability space there is a unique canonical $M$-valued random variable $X$ such that for any $A\in\mc M$, the probability that $X\in A$ is $\mu(A)$. In this way, for $\mu=\lambda_{1,\gamma}$ or $\mu=\lambda^{*}_{\gamma}$ we get the random variable $S\in\mc P(\Omega)$. Conversely, if $Y=f(X)$ is a random variable defined deterministically from $X$ then there is a unique canonical measure $\nu$ such that $\nu(A)$ is the probability that $Y\in A$, i.e. $\nu(A)=\mu(\{x:f(x)\in A\})$.

From such a random variable $S$ we then define further random variables
\begin{alignat*}{2}
	& G_n		&=	&\{\sigma: |\sigma|=n \And (\forall\tau\preceq\sigma)\,\,\tau\in S\},\\
	& G			&=	&\bigcup_{n=0}^\infty G_n, \quad \text{and}\\
	& G_\infty	&=	&\{\sigma\in G: \{\tau\in G : \sigma\prec\tau\}\text{ is infinite}\}.
\end{alignat*}

We have $\Gamma_G=\Gamma_S$ and
$G_\infty\subseteq G\subseteq S$, and values of $G_\infty$ are in one-to-one correspondence with values of $\Gamma_S$.

A value of the random variable $G$ is called a \emph{GW-tree} or a \emph{Florida tree} when $S$ is the canonical random variable associated with $\lambda_{1,\gamma}$ or $\lambda^*_\gamma$, respectively.

Let $e$ be the extinction probability of a GW-tree with parameter $p=2^{-\gamma}$,
\[
	e=\P_{1,\gamma} (\nil) = \lambda_{1,\gamma}(\{S: \Gamma_S=\nil\}).
\]
For any number $a$ let $\overline a=1-a$.
\begin{lem}
	\label{extinct}
	\[
	e=\overline p/p.
	\]
\end{lem}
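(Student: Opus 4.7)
The plan is to derive a recursion for $e$ from the self-similarity of $\lambda_{1,\gamma}$, and then solve a quadratic. For each $i\in\{0,1\}$, set $S_i:=\{\tau\in\Omega:\la i\ra^\frown\tau\in S\}$. Because the Bernoulli events $\{\sigma\in S\}_{\sigma\in\Omega}$ are mutually independent, the random sets $S_0$ and $S_1$ are independent, each is itself $\lambda_{1,\gamma}$-distributed, and both are independent of the event $\{\nilstr\in S\}$---this is immediate from the fact that the index sets $\{\nilstr\}$, $\{\la 0\ra^\frown\tau:\tau\in\Omega\}$, and $\{\la 1\ra^\frown\tau:\tau\in\Omega\}$ partition $\Omega$. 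Unpacking the definition of $\Gamma_S$, one sees that $\Gamma_S\ne\nil$ iff $\nilstr\in S$ and at least one of $\Gamma_{S_0},\Gamma_{S_1}$ is nonempty.

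Conditioning on whether $\nilstr\in S$ (probability $p$) or not (probability $\overline p$) then yields
\[
	e \;=\; \overline p \;+\; p\cdot e^{2},
\]
equivalently $(e-1)(pe-\overline p)=0$, so $e\in\{1,\,\overline p/p\}$.

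To select the correct root I would invoke the classical Galton--Watson dichotomy: in $G$, each node independently has binomial$(2,p)$-many children, giving mean offspring $2p$, and since $\gamma<1$ yields $p>\tfrac12$ the process is supercritical, so its extinction probability is strictly less than $1$. This forces $e=\overline p/p$. The only real obstacle is this last step (eliminating the spurious root $e=1$), which is routine from standard branching-process theory; everything else follows mechanically from the product structure of $\lambda_{1,\gamma}$.
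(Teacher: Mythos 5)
Your proof is correct and takes essentially the same approach as the paper: both derive the quadratic $e = \overline{p} + p e^{2}$ by decomposing the event $\{\Gamma_S = \nil\}$ at the root and exploiting independence of the two subtrees. The only difference is that you explicitly rule out the spurious root $e = 1$ via the supercriticality observation $2p > 1$, whereas the paper's proof of this lemma simply presents the quadratic and leaves the root selection implicit (curiously, in the very next lemma for $e'$ the authors do carry out the root-selection step explicitly). So you have, if anything, filled a small gap in the paper's terse argument rather than diverged from it.
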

\begin{proof}
	Note that $\Gamma_{S}=\nil$ iff either (1) ${\nilstr}\not\in S$, or (2) ${\nilstr}\in S$ but
	\[
	\Gamma_{S\cap \{\sigma:\la i\ra\preceq\sigma\text{ or }\sigma\preceq\la i\ra\}}=\nil
	\]
	for both $i\in\{0,1\}$. This gives the equation $e=\overline p + pe^2$.
\end{proof}

We use standard notation for conditional probability,
\[
	\P(E\mid F)=\frac{\P(E\cap F)}{\P(F)};
\]
in measure notation we may also write $\lambda(E\mid F)=\lambda(E\cap F)/\lambda(F)$.

The following lemma is the first indication that there is a connection between GW-trees and Florida trees. We write $\mathbf 1_A$ for the characteristic function of an event or a set $A$, i.e., $\mathbf 1_A=1$ if $A$ occurs, otherwise $\mathbf 1_A=0$.
\begin{lem}\label{301}
	For all $J\subseteq \{\la 0\ra, \la 1\ra\}$,
	\[
	\lambda_{1,\gamma}\left\{G_\infty\cap\{\la 0\ra,\la 1\ra\right\}=J \mid G_\infty\ne\nil \}=\lambda^*_\gamma[G_1=J].
	\]
\end{lem}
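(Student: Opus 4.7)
The plan is a direct computation exploiting the independence of $\lambda_{1,\gamma}$. Write $p=2^{-\gamma}$, and for each $i\in\{0,1\}$ let $T_i$ be the event that some infinite path through $S$ begins with $\la i\ra$. Because the indicators $\mathbf 1_{\sigma\in S}$ are jointly independent under $\lambda_{1,\gamma}$, the three events $\{\nilstr\in S\}$, $T_0$, and $T_1$ depend on disjoint coordinates and are therefore mutually independent; and the self-similarity of $\lambda_{1,\gamma}$ along the subtree above $\la i\ra$ gives $\P(T_i)=\P(\Gamma_S\ne\nil)=1-e$.

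The key translation I would use is that $\la i\ra\in G_\infty$ iff both $\nilstr\in S$ and $T_i$ hold, and (by K\"onig's Lemma applied to $G$) $G_\infty\ne\nil$ iff $\nilstr\in S$ and at least one of $T_0, T_1$ holds. Combining these with independence gives $\P(G_\infty\ne\nil)=p(1-e^2)$, which simplifies to $1-e$ via the identity $e=\overline p+pe^2$ from the proof of Lemma \ref{extinct}. For each $J\subseteq\{\la 0\ra,\la 1\ra\}$, I would then write $\{G_\infty\cap\{\la 0\ra,\la 1\ra\}=J\}$ as the intersection of $\{\nilstr\in S\}$ with the appropriate (possibly complemented) copies of $T_0$ and $T_1$, take the product of the three probabilities, and divide by $1-e$.

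All four cases then collapse to the values prescribed by $\lambda^*_\gamma$. The empty set yields $0$ on the left (a surviving tree forces some length-one string into $G_\infty$) and $0$ on the right by definition; each singleton $J=\{\la i\ra\}$ yields $pe=\overline p=1-p$ after substituting $e=\overline p/p$; and $J=\{\la 0\ra,\la 1\ra\}$ yields $p(1-e)=2p-1$. The only place where care is needed is remembering that $e$ denotes the \emph{unconditional} extinction probability $\P(\Gamma_S=\nil)$ rather than extinction conditional on $\nilstr\in S$; once this is kept in mind, there is no substantive obstacle.
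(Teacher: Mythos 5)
Your proposal is correct and follows essentially the same route as the paper: both reduce the claim to the independence of the events ``$\nilstr\in S$'', ``$\la 0\ra$ has an infinite descendant chain'', and ``$\la 1\ra$ has an infinite descendant chain'', compute the conditional probability $p(1-e)^2/(1-e)=p(1-e)$, and invoke $e=\overline p/p$. The only cosmetic difference is that you compute all four values of $J$ directly, whereas the paper dispatches three of them by symmetry and normalization and computes only $J=\{\la 0\ra,\la 1\ra\}$.
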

\begin{proof}
	By definition, $\lambda^*_\gamma[G_1=J]$ equals
	\[
		(2p-1)\cdot\mathbf 1_{J=\{\la 0\ra,\la 1\ra\}}+\sum_{i=0}^1 (1-p)\cdot\mathbf 1_{J=\{\la i\ra\}},
	\]
	so we only need to calculate $\lambda_{1,\gamma}\left\{G_\infty\cap\{\la 0\ra,\la 1\ra\right\}=J \mid G_\infty\ne\nil \}$. By symmetry, and because the probability that $G_1=\nil$ is $0$, it suffices to calculate this probability for $J=\{\la 0\ra,\la 1\ra\}$. Now if $G_1=\{\la 0\ra, \la 1\ra\}$ then ${\nilstr}$ survives and both immediate extensions are non-extinct. Thus the conditional probability that $G_1=\{\la 0\ra,\la 1\ra\}$ is $\frac{p(1-e)^2}{1-e}=p(1-e)$. By Lemma \ref{extinct}, this is equal to $2p-1$.
\end{proof}

Let a measure $\lambda_{c}$ on $\mc P(\Omega)$ be defined by
\begin{alignat*}{3}
	&\lambda_{c}(M)={}&\lambda_{1,\gamma}&(M \mid G_\infty\ne\nil),&\quad\text{and}
	\\
	&\lambda_i(M)={}&\nu&(M\mid G_{\infty}\ne\nil),&
\end{alignat*}
where $\nu$ is the canonical measure obtained from $G_\infty$.

Let $\mu_i$ and $\mu_c$ be the canonical measures obtained from $G$ when $S$ is the canonical random variable obtained from $\lambda_i$ and $\lambda_c$, respectively (so $\mu_i=\lambda_i$).

Let $\lambda_f$ be the distribution with sample space $\mc P(\Omega)$ such that for each $\sigma\in\Omega$,
\[
	\lambda_f(\{S:\sigma\in S\})=1-p,
\]
 and the events $\{S:\sigma\in S\}$ are mutually independent for distinct $\sigma$. Note that this is exactly the definition of $\lambda_{1,\gamma}$, but with $1-p$ in place of $p$. If the random variable $G$ is defined as before, but on this new space with measure $\lambda_f$, then $G$ again is a GW-tree, but now with survival parameter $1-p \leq \frac{1}{2}$. It turns out that the extinction probability $e'$ of such a tree is 1, so such a tree is almost surely finite.

\begin{lem}
	\[
		e'=\lambda_f(\{S \mid \Gamma_S = \nil \})=1.
	\]
\end{lem}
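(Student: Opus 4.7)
My plan is to mimic the derivation of Lemma \ref{extinct}, now with $1-p$ playing the role of the survival parameter. The same case split---$\nilstr\notin S$, or else $\nilstr\in S$ with both subtrees rooted at $\la 0\ra$ and $\la 1\ra$ extinct---applies verbatim under $\lambda_f$, using the independence clause in the definition of that measure. Exploiting the self-similarity of the distribution, this yields the recursion
\[
e'=p+(1-p)(e')^2,
\]
which factors as $((1-p)e'-p)(e'-1)=0$, with roots $e'=1$ and $e'=p/(1-p)$.

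The standing assumption $0\le\gamma<1$ forces $p=2^{-\gamma}>1/2$, so $p/(1-p)>1$ and this second root is not a valid probability. Hence the only surviving candidate is $e'=1$.

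The one point that requires care---essentially the whole content beyond the proof of Lemma \ref{extinct}---is to verify that $e'$ really is the \emph{smallest} non-negative solution of the recursion rather than merely \emph{some} solution. I would handle this by the standard monotone-iteration argument: letting $e'_n=\lambda_f\{S:G_n=\nil\}$, one has $e'_0=0$ and $e'_{n+1}=p+(1-p)(e'_n)^2$, and by K\"onig's lemma applied to the binary tree $G$ one has $e'=\lim_n e'_n$. This limit is the least non-negative fixed point of $s\mapsto p+(1-p)s^2$; since both candidate roots are $\ge 1$, the limit is pinned at $1$. Everything else is a direct transcription of the earlier proof, and no genuine obstacle arises.
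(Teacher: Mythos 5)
Your proposal is correct and mirrors the paper's argument exactly: derive $e'=p+(1-p)(e')^2$ from the same case split, obtain the roots $1$ and $p/(1-p)$, and discard the latter since it exceeds $1$. The extra paragraph about pinning $e'$ to the \emph{smallest} non-negative fixed point, while a sound instinct in general (and genuinely needed for Lemma~\ref{extinct}, where both roots $1$ and $\overline p/p$ lie in $[0,1]$), is superfluous here: $e'$ is a probability, hence lies in $[0,1]$, and $1$ is the only root there, so no monotone-iteration argument is required. Incidentally, your base case is off: $e'_0=\lambda_f\{S:G_0=\nil\}=\lambda_f\{S:\nilstr\notin S\}=p$, not $0$; this does not affect the conclusion, since the iterates still converge to the unique fixed point in $[0,1]$, but it is worth getting the indexing right if you do include that argument.
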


\begin{proof}
	As in Lemma \ref{extinct}, we observe that $\Gamma_S = \nil$ iff either (1) ${\nilstr}\not\in S$, or (2) ${\nilstr}\in S$ but
	\[
		\Gamma_{S\cap \{\sigma:\la i\ra\preceq\sigma\text{ or }\sigma\preceq\la i\ra\}}=\nil
	\]
	for both $i\in\{0,1\}$. This gives the equation $e'=\overline (1-p) + (1-p)(e')^2$, which has solutions 1 and $\frac{p}{1-p}$. Since $\frac{p}{1-p}>1$, it cannot represent a probability, so $e'=1$.
\end{proof}

\begin{cor}
	\label{no_almost_paths}
	If $S \subseteq \Omega$ is chosen randomly with respect to $\lambda_f$, then for all reals $f$, there are infinitely many initial segments of $f$ which are not in $S$.
\end{cor}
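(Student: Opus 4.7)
The plan is to reduce the statement to the previous lemma via a ``subtree shift'' combined with a countable union bound. For each $\sigma\in\Omega$, I would introduce the shifted random set
\[
	S_\sigma := \{\tau\in\Omega : \sigma^\frown\tau\in S\}.
\]
Because $\lambda_f$ is the product measure in which each string of $\Omega$ is included independently with probability $1-p$, the collection of bits indexed by strings extending $\sigma$ is itself i.i.d.\ Bernoulli$(1-p)$, so $S_\sigma$ has distribution $\lambda_f$. Applying the preceding lemma to $S_\sigma$ therefore gives $\Gamma_{S_\sigma}=\nil$ with $\lambda_f$-probability $1$.

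Since $\Omega$ is countable, countable additivity then yields a single $\lambda_f$-conull event on which $\Gamma_{S_\sigma}=\nil$ holds simultaneously for every $\sigma\in\Omega$. I would then spell out the elementary reformulation: if some real $f$ had only finitely many initial segments outside $S$, pick $n$ with $f\upharpoonright m\in S$ for all $m\ge n$, set $\sigma=f\upharpoonright n$, and let $g\in 2^\omega$ be determined by $f=\sigma^\frown g$. Then for every $m$,
\[
	\sigma^\frown(g\upharpoonright m)=f\upharpoonright(n+m)\in S,
\]
so $g\upharpoonright m\in S_\sigma$ for all $m$, whence $g\in\Gamma_{S_\sigma}$, contradicting $\Gamma_{S_\sigma}=\nil$.

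There is no serious obstacle here; the only step requiring a moment's thought is the measure-preservation claim for the shift $S\mapsto S_\sigma$, which is immediate from the product-independence structure of $\lambda_f$. Everything else is a direct countable-union argument and a syntactic reformulation of ``only finitely many initial segments miss $S$'' as ``some shifted subtree has an infinite path''.
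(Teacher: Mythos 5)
Your proof is correct, and it takes a genuinely different route from the paper. The paper works with the set $M=\{S : \Gamma_S\neq\nil\}$ and the family of "finite modification" classes $M'=\{S : (\exists S'\in M)\, S\setminus T=S'\setminus T\}$, bounding $\lambda_f(M')$ via the density estimate $\lambda_f(M')\le (1/p)^{|T|}\lambda_f(M)=0$ and then taking a countable union over all finite $T\subseteq\Omega$. You instead exploit the shift $S\mapsto S_\sigma=\{\tau : \sigma^\frown\tau\in S\}$: because $\lambda_f$ is a Bernoulli product measure on $2^\Omega$ and the map $\tau\mapsto\sigma^\frown\tau$ is injective, the pushforward of $\lambda_f$ under this shift is again $\lambda_f$, so the preceding lemma applies to $S_\sigma$ and gives $\Gamma_{S_\sigma}=\nil$ a.s.; a countable union over $\sigma\in\Omega$ then finishes it, since a real with cofinitely many prefixes in $S$ is exactly a real whose tail lies in $\Gamma_{S_\sigma}$ for $\sigma$ a sufficiently long prefix. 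What your route buys: the measure-preservation of the shift is immediate from the i.i.d.\ structure of $\lambda_f$, avoiding the explicit $(1/p)^{|T|}$ density bound and any bookkeeping over subsets $J\subseteq T$. What the paper's route buys: the classes $M'$ are manifestly $\Pi^0_1$ of measure zero, which is what is leveraged in Remark~\ref{david remark} to place $M''$ inside the universal $\lambda_f$-ML-test; your shifted classes $\{S : \Gamma_{S_\sigma}\neq\nil\}$ are also $\Pi^0_1$ null and computable pullbacks of $M$, so the same conclusion is available, but the paper's formulation makes that step a one-liner. Both proofs are short, elementary, and sound.
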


\begin{proof}
	Let $M \subseteq 2^\Omega$ have measure 0, and let $T$ be a fixed, finite set of strings. Define $M' = \{S \mid (\exists S' \in M) S \setminus T = S' \setminus T \}$. Then $\lambda_f(M') \leq \left( \frac{1}{p}\right)^{|T|} \lambda_f(M)=0$, so $M'$ also has measure 0. Now let $T$ vary, and let
	\[
		M''= \{S \mid (\exists S' \in M)(\exists T \subset \Omega, |T|<\infty) S \setminus T = S' \setminus T \}.
	\]
	Then $M''$ is a countable union of measure 0 sets, so has measure 0. If
	\[
		M=\{S \mid \Gamma_S \neq \nil \},
	\]
	then $M''$ is the set we are interested in, which therefore has measure 0.
\end{proof}

\begin{rem}
	\label{david remark}
	Take $M=\{S \mid \Gamma_S \neq \nil \}$, and fix $T$. Then the set $M'$ above is a $\Pi^0_1$ class of measure 0, and is therefore contained in the intersection of the universal ML-test. Since this is true for all finite $T$, the set $M''$ is also contained in the intersection of the universal ML-test.
\end{rem}

We define a $\mu_i\times\lambda_f\to \mu_c$ measure-preserving map $\psi:2^\Omega\times 2^\Omega\to 2^\Omega$. The idea is to overlay two sets $G_i$, $S_f$, so that $G_i$ specifies the extendible nodes of a tree, and $S_f$ specifies the non-extendible nodes. Let $\psi$ be defined by
\[
	\psi(G_i, S_f)=\{\sigma : (\forall \tau \preceq \sigma) \tau \in G_i \cup S_f\}.
\]
(In other words, since $G_i \cup S_f$ will not necessarily be a tree, we take the set of strings in that set whose predecessors are also all in the set to get the largest possible tree contained in that set of strings.)

\begin{lem}
	\label{previous}
	Write $\sigma 2=\{\sigma0,\sigma 1\}$. The following identities hold for every string $\sigma$, and every set $D \subseteq \{\sigma 0, \sigma 1\}$:
	\begin{alignat*}{1}
		(\mu_i \times \lambda_f)(G \cap \sigma 2 = D \mid \sigma \in G)								&=\mu_c(G_\infty \cap \sigma 2 = D \mid \sigma \in G_\infty),\\
		(\mu_i \times \lambda_f)(\psi(G,S) \cap \sigma 2 = D \mid \sigma \in G) 					&=\mu_c(G \cap \sigma 2 = D \mid \sigma \in G_\infty),\text{ and}\\
		(\mu_i \times \lambda_f)(\psi(G,S) \cap \sigma 2 = D \mid \sigma \in \psi(G,S) \setminus G)	&=\mu_c(G \cap \sigma 2 = D \mid \sigma \in G \setminus G_\infty).
	\end{alignat*}
\end{lem}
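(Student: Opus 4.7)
The plan is to reduce each of the three identities to a computation at the root of a fresh Galton--Watson tree, using the independence structure of the measures. Under $\lambda_{1,\gamma}$ the events $\{\tau \in S\}$ for distinct $\tau$ are independent, so conditional on $\sigma \in G$ the subtree of $S$ on the cone above $\sigma$ is distributed as a fresh GW tree with live root, independent of everything outside. Under $\mu_c$ this independence survives because $G_\infty \ne \nil$ can be witnessed either by an infinite path through $\sigma$ or by one branching off earlier, and the latter depends only on $S$ outside the subtree at $\sigma$; this is the key subtlety and the main thing to check carefully. Under $\mu_i \times \lambda_f$ the first coordinate equals $G_\infty$ pathwise, and $\lambda_f$ contributes an independent pair of Bernoulli$(1-p)$ variables on $\sigma 2$. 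Thus in every identity we may assume $\sigma$ is the root of a fresh tree, and the remaining work is arithmetic with the quadratic $e = \overline p + p e^2$ from Lemma~\ref{extinct}.

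The first identity is essentially tautological: under $\mu_c$, the variable $G_\infty$ has distribution $\mu_i = \lambda_i$, and conditioning on $\sigma \in G_\infty$ in $\mu_c$ matches conditioning the first coordinate on containing $\sigma$ in $\mu_i \times \lambda_f$, since $G$ on that coordinate is pathwise equal to $G_\infty$.

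For the second identity, the right-hand side is the law of the children of a live root of a fresh GW tree conditioned on the tree being infinite; a short calculation yields probabilities $(2p^2 - 2p + 1,\, p(1-p),\, p(1-p),\, 0)$ for $D = \{\sigma 0, \sigma 1\}, \{\sigma 0\}, \{\sigma 1\}, \nil$ respectively. On the left, since the predecessors of $\sigma$ all lie in $G = G_\infty$ (hence in $G \cup S$), we have $\psi(G, S) \cap \sigma 2 = (G_\infty \cap \sigma 2) \cup (S \cap \sigma 2)$; the first term is $\lambda^*_\gamma$-distributed by Lemma~\ref{301}, the second is two independent Bernoulli$(1-p)$'s, and a routine union computation produces the same tuple.

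For the third identity, $G_\infty$ is closed under predecessors, so $\sigma \notin G_\infty$ forces $G_\infty \cap \sigma 2 = \nil$; hence on the left $\psi(G, S) \cap \sigma 2 = S \cap \sigma 2$, which is just an independent pair of Bernoulli$(1-p)$'s, giving $(p^2,\, p(1-p),\, p(1-p),\, (1-p)^2)$ for $D = \nil, \{\sigma 0\}, \{\sigma 1\}, \{\sigma 0, \sigma 1\}$ respectively. On the right, the reduction places us at a fresh GW root conditioned on being live and having a finite tree; since the joint probability that $S \cap \sigma 2 = D$ together with the tree being finite equals $p^{|D|}(1-p)^{2-|D|} e^{2|D|}$, dividing by $\P(\text{tree finite} \mid \text{root live}) = e^2$ and using $e = \overline p/p$ produces the same tuple.
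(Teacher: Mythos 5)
Your proposal is correct and takes essentially the same route as the paper: the paper's proof simply observes that conditioning on $\sigma\in G_\infty$ (respectively $\sigma\in G\setminus G_\infty$) lets one replace $\mu_c$ by $\lambda_{1,\gamma}$ and then declares the remaining computation straightforward and omits it, which is exactly the arithmetic you supply. Your probability tuples check out against $e=(1-p)/p$, so that the subtree-at-$\sigma$ extinction probability is $e^2=(1-p)^2/p^2$ and one indeed gets $p^2(1+e^2)=2p^2-2p+1$, $p^2e^2=(1-p)^2$, and $(1-p)^2/e^2=p^2$ as you state.
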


\begin{proof}
	Note that the event $\sigma \in G_\infty$ implies the event $G_\infty \neq \nil$, and the event $\sigma \in G \setminus G_\infty$ implies that any further events cannot affect the probability of the event $G_\infty \neq \nil$. Thus we may replace $\mu_c$ by $\lambda_{1,\gamma}$ in the above, as $\mu_c$ is $\lambda_{1,\gamma}$ conditioned on the event $G_\infty \neq \nil$. The rest is a straightforward computation, and is omitted.
\end{proof}

\begin{thm}
	The map $\psi$ is $\mu_i \times \lambda_f \to \mu_c$ measure preserving.
\end{thm}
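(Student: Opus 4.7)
The plan is to show that the pushforward $(\mu_i \times \lambda_f)\circ\psi^{-1}$ agrees with $\mu_c$ on every cylinder event of the form $\{S \cap 2^{\le n} = E\}$, where $E$ ranges over finite prefix-closed subtrees of $2^{\le n}$. Since these events generate the relevant $\sigma$-algebra on $2^\Omega$, agreement on them implies the theorem. I would actually prove a stronger labelled statement by induction on $n$: jointly with the tree, record for each node $\sigma$ of depth $n$ a label in $\{\text{alive}, \text{dead}\}$, where on the product side alive means $\sigma \in G_i$ and dead means $\sigma \in \psi(G_i, S_f)\setminus G_i$, while on the $\mu_c$ side alive means $\sigma \in G_\infty$ and dead means $\sigma \in G \setminus G_\infty$. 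Carrying the labels is what keeps the induction self-contained.

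For the inductive step, fix a labelled tree of depth $n$ and condition on it. I need the extensions of distinct depth-$n$ nodes to be conditionally independent under both measures. On the product side this is immediate, since $\lambda_i$ and $\lambda_f$ have independent coordinates at distinct strings, and $\psi$ acts coordinatewise below the depth-$n$ frontier. On the $\mu_c$ side it follows from the Markov branching structure of $\lambda_{1,\gamma}$: the subtrees rooted at distinct depth-$n$ nodes are independent, the conditioning event $G_\infty \ne \nil$ is already guaranteed by the existence of at least one alive node at depth $n$, and the alive/dead label on any descendant depends only on the independent survival of the subtree rooted at that descendant's depth-$n$ ancestor. Granting this conditional independence, the induction reduces to a single one-node comparison for each depth-$n$ node.

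The one-node comparison is precisely the content of Lemma \ref{previous}. For an alive depth-$n$ node $\sigma$, the second identity gives the distribution of $\psi(G_i, S_f)\cap \sigma 2$ on the product side and of $G \cap \sigma 2$ on the $\mu_c$ side; moreover the alive/dead labels of the children $\sigma 0, \sigma 1$ are transported correctly, since a child is alive iff it lies in $G_i$ on one side and in $G_\infty$ on the other. For a dead depth-$n$ node $\sigma$, the third identity plays the analogous role, and every child produced is automatically dead on both sides. The main obstacle, in my view, is the conditional independence claim on the $\mu_c$ side: the conditioning on $G_\infty \ne \nil$ is global and appears at first glance to couple distinct branches. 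The labelled induction resolves this by converting that global conditioning into a local and already-satisfied one as soon as a single alive node has appeared at depth $n$. With that point pinned down, the remainder is a routine level-by-level computation which I would not write out in full.
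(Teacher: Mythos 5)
Your proposal is correct and takes essentially the same route as the paper. The paper encodes your alive/dead labelling as the pair $(T,T')$ in the sets $N_{T,T'}=\{S \mid S \text{ is a tree, } S\upto n = T,\ S_\infty\upto n = T'\}$, runs the same depth-$n$ induction, disposes of the coupling to the global event $G_\infty\ne\nil$ exactly as you indicate (an alive node already implies it; a dead node's subtree is irrelevant to it), and invokes Lemma~\ref{previous} for the one-node comparisons.
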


\begin{proof}
	To show that $\psi$ is measure preserving, it suffices to show that it is measure preserving on the basic open sets. We will write $\Omega_n=2^{<n}$ for the set of strings of length less than $n$, and given $S \subseteq\Omega$, we will write $S \upto n$ for $S \cap {\Omega_n}$, the set of strings in $S$ of length less than $n$. Recall that the basic open sets in $2^\Omega$ are the sets of the form $N_T=\{S \mid S \upto n = T \}$ for fixed $n \in \omega$, $T \subseteq {\Omega_n}$. Thus we must show that $\mu_c(N_T)=(\mu_i \times \lambda_f)(\psi^{-1}(N_T))$ for each $T$.

	Since $\mu_c(\{S \mid S \text{ is not a tree}\})=0$, and $\psi(G,S)$ is always a tree, we can ignore elements of $2^\Omega$ which are not trees, and focus our attention on the sets $N_T$ where $T$ is a tree. For $T' \subseteq T$, let $N_{T,T'}=\{S \mid S \text{ is a tree, }S \upto n = T \text{, and }S_\infty \upto n = T' \}$. Observe that $N_T$ is equal to the disjoint union $N_T = \bigcup_{T' \subseteq T} N_{T,T'}$. Thus it suffices to show that $\mu_c(N_{T,T'})=(\mu_i \times \lambda_f)(\psi^{-1}(N_{T,T'}))$ for each pair $T,T'$.

	Suppose $\psi(G,S) \in N_{T,T'}$. Then either $G \upto n = T'$, the extendible tree $G$ contains a non-extendible node, or the tree $\psi(G,S)$ contains an extendible node outside of $G$. The latter event implies that there is a string $f$ such that all but finitely many initial segments of $f$ are elements of $S$. By Corollary \ref{no_almost_paths}, such an event has $\lambda_f$-measure 0, so (up to measure 0 events), $\psi(G,S) \in N_{T,T'}$ implies that $G \upto n = T'$.

	We will prove, by induction, for each $n$, and all $T' \subseteq T \subseteq {\Omega_n}$, that $\mu_c(N_{T,T'})=(\mu_i \times \lambda_f)(\psi^{-1}(N_{T,T'}))$. First, we see that when $n=1$, we either have $T=\nil$ or $T=\{{\nilstr}\}$, and similarly with $T'$. We have that $\mu_c$ is conditioned on the event $G_\infty \neq \nil$, and $\mu_i$ is the distribution of a nonempty extendible tree, so both sides are equal to 1 when $T=T'=\{{\nilstr}\}$, and 0 when $T'=\nil$, so we have equality when $n=1$.

	Now assume $n>0$, and equality holds for $T' \subseteq T \subseteq {\Omega_n}$. Let $U' \subseteq U \subseteq {\Omega_{n+1}}$ with $U \upto n = T$, and $U' \upto n = T'$. We may assume that $U,U'$ are trees. We wish to show that $\mu_c(N_{U,U'})=(\mu_i \times \lambda_f)(\psi^{-1}(N_{U,U'}))$, given that the same equality holds with $T,T'$ in place of $U,U'$. Note that, given $N_{T,T'}$, the event $N_{U,U'}$ may be thought of as the intersection, over all $\sigma \in T \cap 2^{n-1}$, of the events $G \cap \{\sigma 0, \sigma 1\} = U \cap \{\sigma 0, \sigma 1\}$ and $G_\infty \cap \{\sigma 0, \sigma 1\} = U' \cap \{\sigma 0, \sigma 1\}$, and these events are independent for distinct $\sigma$. Similarly, given $\psi^{-1}(N_{T,T'})$, the event $\psi^{-1}(N_{U,U'})$ may be thought (up to events of measure 0) of as the intersection, over all $\sigma \in T \cap 2^{n-1}$, of the events $\psi(G,S) \cap \{\sigma 0, \sigma 1\} = U \cap \{\sigma 0, \sigma 1\}$ and $G \cap \{\sigma 0, \sigma 1\} = U' \cap \{\sigma 0, \sigma 1\}$, and these events are independent for different $\sigma$. By Lemma \ref{previous}, the corresponding probabilities are all equal, so $\mu_c(N_{U,U'} \mid N_{T,T'})=(\mu_i \times \lambda_f)(\psi^{-1}(N_{U,U'}) \mid \psi^{-1}(N_{S,S'}))$. By induction, $\psi$ is measure-preserving.
\end{proof}

Intuitively, a $\lambda_i$-ML-random tree may by van Lambalgen's theorem be extended to a $\lambda_c$-ML-random tree by adding finite pieces randomly; we verify this intuition in the following theorem.

\begin{thm}\label{random}
	For each ML-random Florida tree $H$ there is a ML-random GW-tree $G$ with $G_\infty=H_\infty$.
\end{thm}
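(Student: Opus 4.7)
The plan is to combine the previous measure-preservation theorem with van Lambalgen's theorem. First, I would identify ML-random Florida trees with $\mu_i$-ML-random sets. By Lemma \ref{301} together with the Markov-like branching independence in a GW-tree, the canonical distribution $\mu_i$ of the extendible part of a GW-tree conditioned on non-extinction coincides with the Florida distribution of $G$. Consequently an ML-random Florida tree $H$ is exactly a $\mu_i$-ML-random set.

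Next, I would pick $S_f\subseteq\Omega$ that is $\lambda_f$-ML-random relative to $H$; such $S_f$ exists by standard relativization. By van Lambalgen's theorem, the pair $(H,S_f)$ is $(\mu_i\times\lambda_f)$-ML-random. Since $\psi$ is computable and the previous theorem shows it is $(\mu_i\times\lambda_f)\to\mu_c$ measure-preserving, the image $G:=\psi(H,S_f)$ is $\mu_c$-ML-random. Because $\mu_c$ differs from the unconditional distribution of $G$ under $\lambda_{1,\gamma}$ only by conditioning on the positive-measure event $G_\infty\ne\nil$, $G$ is an ML-random GW-tree.

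To verify $G_\infty=H_\infty$, observe that every infinite path through $H$ has all its prefixes in $H\subseteq H\cup S_f$, hence is an infinite path through $G=\psi(H,S_f)$. Conversely, for any infinite path $x$ through $G$, every prefix of $x$ lies in $H\cup S_f$. By Corollary \ref{no_almost_paths} relativized to $H$ (whose null exceptional set is contained in the $H$-relative universal ML-test by Remark \ref{david remark}), the chosen $S_f$ admits no real whose prefixes lie all but finitely in $S_f$; thus infinitely many prefixes of $x$ lie in $H$, and since $H$ is prefix-closed, all prefixes of $x$ lie in $H$. Hence $G$ and $H$ have the same infinite paths, so $G_\infty=H_\infty$.

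The main obstacle I anticipate is the first step, where ML-randomness under two a priori distinct measures (the Florida distribution $\lambda^*_\gamma$ and the conditioned distribution $\mu_i$) must be shown to coincide. This should follow by pushing the coincidence of one-step branching laws (Lemma \ref{301}) through the subtree-independence structure of a GW-tree, together with the standard fact that conditioning on a positive-measure effective event preserves ML-randomness.
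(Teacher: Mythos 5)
Your proof takes essentially the same route as the paper's: pick $S_f$ that is $\lambda_f$-ML-random relative to $H$, use van Lambalgen to get $(H,S_f)$ random for $\mu_i\times\lambda_f$, push forward through the measure-preserving $\psi$ to get a $\mu_c$-ML-random $G$, and then invoke Corollary~\ref{no_almost_paths} and Remark~\ref{david remark} to conclude $G_\infty=H_\infty$. The ``obstacle'' you flag at the end (identifying the Florida distribution with $\mu_i$ via Lemma~\ref{301} and branching independence) is also present in the paper, where it is handled only by the parenthetical ``i.e.''.
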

\begin{proof}
	Let $H$ be an ML-random Florida tree (i.e., let it be ML-random with respect to the measure $\mu_i$). Let $S \subseteq \Omega$ be ML-random relative to $H$ with respect to the measure $\lambda_f$. Then, by a suitably generalized version of van Lambalgen's theorem\footnote{To be precise, van Lambalgen's theorem holds in the unit interval $[0,1]$ with Lebesgue measure $\lambda$, or equivalently the space $2^\omega$. If $(X,\mu)$ is a measure space then using the measure-preserving map $\varphi:(X,\mu)\to ([0,1],\lambda)$ induced from the Carath\'eodory measure algebra isomorphism theorem \cite{KNe}, we may apply van Lambalgen's theorem as desired.}, $(H,S)$ is ML-random relative to the measure $\mu_i \times \lambda_f$. Now since $\psi$ is effectively continuous, if $(U_n)$ is a uniformly $\Sigma^0_1$ sequence, than so is $(\psi^{-1}(U_n))$. Furthermore, since $\psi$ is $\mu_i \times \lambda_f \to \mu_c$ measure preserving, we have $\mu_c(U_n)=(\mu_i \times \lambda_f)(\psi^{-1}(U_n)$, so $\psi^{-1}$ pulls back $\mu_c$-ML-tests to $\mu_i \times \lambda_f$-ML-tests. Thus, since $(H,S)$ passes every ML-test, so also must $G=\psi(H,S)$, so $G$ is a ML-random GW-tree.

	Now, by Corollary \ref{no_almost_paths} and Remark \ref{david remark}, the set
	\[
		M=\{S' \mid S' \text{ contains all but finitely many initial segments of some real}\}
	\]
	has measure 0, and is contained in the universal $\lambda_f$-ML-test. Since $S$ is ML-random with respect to $\lambda_f$, for any path $f \in \Gamma_G$, there are infinitely many initial segments of $f$ which are not in the set $S$. Thus $f$ must contain infinitely many initial segments in $H$, which means that $f \in \Gamma_H$. Therefore $\Gamma_G \subseteq \Gamma_H$, and $G_\infty \subseteq H_\infty$. But $H \subseteq G$, so we have equality: $G_\infty = H_\infty$.
\end{proof}

We next prove that the live part of every infinite ML-random GW-tree is an ML-random Florida tree.

 \begin{thm}
	\label{thought}
	For each $S$, if $S$ is $\lambda_{1,\gamma}$-ML-random then $G_\infty$ is $\lambda^*_\gamma$-random.
\end{thm}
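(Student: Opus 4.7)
The plan is to prove the theorem via a reverse application of the measure-preserving map $\psi$, mirroring the construction of Theorem \ref{random} but running the van Lambalgen argument in the opposite direction.

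First I would reduce to the case $G_\infty \neq \nil$: the claim is vacuous when $G_\infty = \nil$ since the support of $\lambda^*_\gamma$ consists of nonempty trees. The event $\{S:G_\infty(S)\neq\nil\}$ is $\Sigma^0_1$ of positive $\lambda_{1,\gamma}$-measure $1-e$, so under this assumption the $\lambda_{1,\gamma}$-ML-randomness of $S$ is equivalent to the $\mu_c$-ML-randomness of $S$. Thus $G := G(S)$ is a $\mu_c$-ML-random GW-tree.

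Second, the idea is to reconstruct a $\mu_i \times \lambda_f$-ML-random preimage of $G$ under $\psi$. Adjoin an auxiliary real $Y \in 2^\omega$ that is Lebesgue-ML-random relative to $S$; by van Lambalgen's theorem $(S,Y)$ is jointly ML-random under $\lambda_{1,\gamma} \times \lambda$. From $Y$ I would construct via a computable recoding a set $\tilde S_f$ that is $\lambda_f$-ML-random relative to $S$, and then build $S_f$ by overriding $\tilde S_f$ on the deterministic part dictated by $\psi$: force $\sigma \in S_f$ whenever $\sigma \in G \setminus G_\infty$, force $\sigma \notin S_f$ whenever $\sigma \in \partial G$ (the boundary, i.e., immediate non-$G$ extensions of $G$-nodes), and let $S_f$ agree with $\tilde S_f$ elsewhere. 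By construction $\psi(G_\infty, S_f) = G$, and by the measure-preservation of $\psi$ the pair $(G_\infty, S_f)$ is distributed according to $\mu_i \times \lambda_f$.

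Third, I would argue that $(G_\infty, S_f)$ is itself $\mu_i \times \lambda_f$-ML-random, inheriting from the joint ML-randomness of $(S,Y)$. Then van Lambalgen applied to this pair immediately yields that the first coordinate $G_\infty$ is $\mu_i$-ML-random; by Lemma \ref{Logan} equivalently $\lambda^*_\gamma$-ML-random, which is the conclusion. Note that along the way one must also verify, for ML-random pairs, the pointwise identity $G_\infty(\psi(G_i,S_f)) = G_i$; this is precisely the content of Corollary \ref{no_almost_paths} together with Remark \ref{david remark}, used exactly as in the proof of Theorem \ref{random}.

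The main obstacle is the randomness inheritance in the third step: the construction of $S_f$ from $(S, Y)$ uses $G_\infty$ and $\partial G$, which are $\Pi^0_1$-computable from $S$ rather than outright computable, so the standard ``randomness under computable measure-preserving maps'' does not apply directly. Handling this requires a layerwise / Schnorr-style argument exploiting the fact that the $\Pi^0_1$ quantities appear in the construction only as indicator overrides. An alternative, more direct route — proving the contrapositive by effectively approximating $G_\infty$ via its decreasing $\Sigma^0_1$ shells $A_m(G) = \{\sigma \in G : \sigma \text{ has descendants in } G \text{ at depth} \ge m\}$ — also faces an obstacle: the naive $\Sigma^0_1$ over-approximation of $\{G : G_\infty \cap \Omega_k = T\}$ (via $T \subseteq G$ plus boundary extinction) has a measure blow-up factor of $(p/(1-e))^{1+a} = (1/p^2 \cdot (2p-1))^{-(1+a)}$, unbounded as $a = $ \#fully-branching nodes grows. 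The correct remedy combines this with the dual $\Sigma^0_1$ approximation ``each leaf of $T$ reaches depth $m$'' at the same $m$; the crucial identity $p(1+e) = p + \bar p = 1$ then forces the resulting $\Sigma^0_1$ cover to have, in the $m \to \infty$ limit, $\mu_c$-measure exactly $\mu_i(B_{n,j})$, summable to $\le 2^{-n}$.
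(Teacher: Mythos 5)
Your measure-theoretic observations are correct---under $\mu_c$, the pair $(G_\infty, S_f)$ that your construction produces is indeed distributed as $\mu_i \times \lambda_f$, by essentially the same calculation that underlies Lemma~\ref{previous}---but the proposal is not a proof, because all of the difficulty is concentrated in your Step~4 and left there unaddressed. The map $(S,Y)\mapsto(G_\infty(S),S_f(S,Y))$ is not computable: deciding which case ($G\setminus G_\infty$, $\partial G$, or free) a given string falls into requires deciding $\sigma\in G_\infty$, which is $\Pi^0_1(S)$ and not $\Delta^0_1(S)$, and pushforward along such a map does not automatically preserve ML-randomness. Showing that it nonetheless does here is precisely the content of Theorem~\ref{thought}, repackaged: once one knows $(G_\infty,S_f)$ is $\mu_i\times\lambda_f$-ML-random, its first coordinate is $\mu_i$-ML-random by mere projection (the easy direction of van Lambalgen), so your Step~5 does no work and Step~4 is circular. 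The ``layerwise / Schnorr-style argument'' that you defer is where the whole theorem lives. (A side slip: $\{S:G_\infty\ne\nil\}$ is $\Pi^0_1$, not $\Sigma^0_1$; extinction is the $\Sigma^0_1$ event.)

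The paper attacks the problem directly at the level of tests, and its machinery is exactly what such a layerwise argument would have to supply. Given a $\lambda^*_\gamma$-ML-test $\{U_n\}$ with $G_\infty\in\bigcap_n U_n$, the preimage $\Upsilon_n=\{S:G_\infty\in U_n\}$ is not $\Sigma^0_1$, but it is covered by $W_n\cup X_n\cup V_n$ where the key device is a $G$-computable stopping level $L=\Phi^G(n,\ell)$: each node at level $\ell$ waits until it either dies out in $G$ or accumulates $m_{n,\ell}$ descendants, with $m_{n,\ell}$ chosen so that $e^{m_{n,\ell}}\le 2^{-n}2^{-2\ell}$. The $\Sigma^0_1$ proxy $A_G(\ell,L)$ for $G_\infty\cap\{0,1\}^{\le\ell}$ then disagrees with the truth only on a $\Sigma^0_1$ set $X_n$ of $\lambda_{1,\gamma}$-measure at most $2^{-n}$, while nonconvergence and the actual test membership are handled by $W_n$ and $V_n$. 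Your closing sketch gestures at something like this but mistakes the mechanism: the paper does not control a ``measure blow-up factor'' of a crude over-approximation and then cancel it against a dual bound; it chooses the depth $L$ adaptively so that the approximation error is summably small outright. If you want to salvage the $\psi$-and-van-Lambalgen framing, you would first have to prove that $S\mapsto G_\infty$ is layerwise computable on the $\mu_c$-randoms, and the natural proof of that fact is exactly the $\Phi^G$ construction---so the repackaging buys nothing over the direct argument.
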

\begin{proof}
	Suppose $\{U_n\}_{n\in\omega}$ is a $\lambda^*_\gamma$-ML-test with $G_\infty\in\bigcap_n U_n$. Let $\Upsilon_n=\{S: G_\infty\in U_n\}$. By Lemma \ref{301}, $\lambda_{1,\gamma}(\Upsilon_n)=\lambda^*_\gamma(U_n)$. Unfortunately, $\Upsilon_n$ is not a $\Sigma^0_1$ class, but we can approximate it.  While we cannot know if a tree will end up being infinite, we can make a guess that will usually be correct.

	Let $e$ be the probability of extinction for a GW-tree. By Lemma \ref{extinct} we have $e=\frac{\overline p}{p}$, so since $p>1/2$, $e<1$. Thus there is a computable function $(n,\ell)\mapsto m_{n,\ell}$ such that for all $n$ and $\ell$, $m=m_{n,\ell}$ is so large that $e^m\le 2^{-n} 2^{-2\ell}$. Let $\Phi$ be a Turing reduction so that $\Phi^G({n,\ell})$, if defined, is the least $L$ such that all the $2^\ell$ strings of length $\ell$ either are not on $G$, or have no descendants on $G$ at level $L$, or have at least $m_{n,\ell}$ many such descendants.  Let
	\[
		W_n=\{S: \text{ for some $\ell$, $\Phi^G({n,\ell})$ is undefined}\}.
	\]

 	Let $A_G(\ell)=G_\infty\cap \{0,1\}^{\le \ell}$ be $G_\infty$ up to level $\ell$. Let the approximation $A_G(\ell,L)$ to $A_G(\ell)$ consist of the nodes of $G$ at level $\ell$ that have descendants at level $L$. Let
	\begin{align*}
		V_n=	&\{S: A_G(\ell,L)\in U_n \text{ for some $\ell$, where $L=\Phi^G({n,\ell})$}\},\text{ and} \\
		X_n=	&\{S:\text{for some $\ell$, $L=\Phi^G({n,\ell})$ is defined and $A_G(\ell,L)\ne A_G(\ell)$} \}.
	\end{align*}
	Note that $\Upsilon_n =\{S: \text{for some $\ell$, $A_G(\ell)\in U_n$ }\},$ hence $\Upsilon_n\subseteq W_n\cup X_n\cup V_n$. Thus it suffices to show that $\cap_n V_n$, $W_n$, $\cap_n X_n$ are all $\lambda_{1,\gamma}$-ML-null sets.

	\begin{lem}
		$\lambda_{1,\gamma}(W_n)=0$.
	\end{lem}
	\begin{proof}
		If $\Phi(\ell)$ is undefined then there is no $L$, which means that for the fixed set of strings on $G$ at level $\ell$, they do not all either die out or reach $m$ many extensions. But eventually this must happen, so $L$ must exist.

		Indeed, fix any string $\sigma$ on $G$ at level $\ell$. Let $k$ be the largest number of descendants that $\sigma$ has at infinitely many levels $L>\ell$. If $k>0$ then with probability 1, above each level there is another level where actually $k+1$ many
descendants are achieved. So we conclude that either $k=0$ or $k$ does not exist.
	\end{proof}

	From basic computability theory, $W_n$ is a $\Sigma^0_2$ class. Hence each $W_n$ is a Martin-L\"of null set.

	\begin{lem}
		$\lambda_{1,\gamma}(X_n)\le 2^{-n}$.
	\end{lem}
	\begin{proof}

		Let $E_\sigma$ denote the event that all extensions of $\sigma$ on level $L$ are \emph{dead}, i.e. not in $G_\infty$. Let $F_\sigma$ denote the event that $\sigma$ has at least $m$ many descendants on $G(L)$.

 		If $A_G(\ell,L)\ne A_G(\ell)$ then some $\sigma\in\{0,1\}^\ell\cap G$ has at least $m$ many descendants at level $L$, all of which are dead. If a node $\sigma$ has at least $m$ descendants, then the chance that all of these are dead, given that they are on $G$ at level $L$, is at most $e^m$ (the eventual extinction of one is independent of that of another), hence writing $\P=\lambda_{1,\gamma}$, we have
		\begin{align*}
			\mathbb P\{A_G(\ell,L)\ne A_G(\ell)\} \le  \sum_{\sigma\in\{0,1\}^\ell} \mathbb P\{ E_\sigma\And F_\sigma\}= \sum_{\sigma\in\{0,1\}^\ell} \mathbb P\{  E_\sigma \mid F_\sigma \}\cdot \mathbb P\{ F_\sigma \} \\
			\le \sum_{\sigma\in\{0,1\}^\ell} \mathbb P\{ E_\sigma \mid F_\sigma\}\le \sum_{\sigma\in\{0,1\}^\ell} e^m \le \sum_{\sigma\in\{0,1\}^\ell} 2^{-n} 2^{-2\ell}     =  2^{-n}2^{-\ell},
		\end{align*}
		and hence
		\[
			\mathbb PX_n \le \sum_\ell \mathbb P\{A_G(\ell,L)\ne A_G(\ell)\}\le \sum_{\ell} 2^{-n}2^{-\ell}= 2^{-n}.
		\]
	\end{proof}

	$X_n$ is $\Sigma^0_1$ since when $L$ is defined, $A_G(\ell)$ is contained in $A_G(\ell, L)$, and $A_G(\ell)$ is $\Pi^0_1$ in $G$, which means that if the containment is proper then we can eventually enumerate (observe) this fact. Thus $\cap_n X_n$ is a $\lambda_{1,\gamma}$-ML-null set.

	$V_n$ is clearly $\Sigma^0_1$. Moreover $V_n\subseteq \Upsilon_n\cup X_n$, so $\lambda_{1,\gamma}(V_n)\le 2\cdot 2^{-n}$, hence $\cap_n V_n$ is a $\lambda_{1,\gamma}$-ML-null set.
\end{proof}

\section{Being a member of some ML-random closed set}\label{sec3}

For a real number $0\le\gamma\le 1$, the $\gamma$-weight $\wt_\gamma(C)$ of a set of strings $C\subseteq\Omega$ is defined by
\[
	\wt_\gamma(C)=\sum_{w\in C} 2^{-|w|\gamma}.
\]
We define several notions of randomness of individual reals.

A \emph{Martin-L\"of $\gamma$-test} is a uniformly $\Sigma^0_1$ sequence $(U_n)_{n<\omega}$, $U_n\subseteq\Omega$, such that for all $n$, $\wt_\gamma(U_n)\le 2^{-n}.$

A \emph{strong ML-$\gamma$-test} is a uniformly $\Sigma^0_1$ sequence $(U_n)_{n<\omega}$ such that for each $n$ and each prefix-free set of strings $V_n\subseteq U_n$, $\wt_\gamma(V_n)\le 2^{-n}$.

A real is (strongly) $\gamma$-random if it does not belong to $\inter_n [U_n]^\preceq$ for any (strong) ML-$\gamma$-test $(U_n)_{n<\omega}$.

 If $\gamma=1$ we simply say that the real, or the set of integers $\{n:x(n)=1\}$, is \emph{Martin-L\"of random (ML-random)}. For $\gamma=1$, strength makes no difference.

 For a measure $\mu$ and a real $x$, we say that $x$ is \emph{Hippocrates $\mu$-random} if for each sequence $(U_n)_{n<\omega}$ that is uniformly $\Sigma^0_1$, and where $\mu [U_n]^\preceq\le 2^{-n}$ for all $n$, we have $x\not\in \inter_n [U_n]^\preceq$.

 Let the ultrametric $\upsilon$ on $2^\omega$ be defined by
\[
	\upsilon(x,y)=2^{-\min\{n:x(n)\ne y(n)\}}.
\]
 The \emph{$\gamma$-energy} \cite{MP} of a measure $\mu$ is
\[
	I_\gamma(\mu):=\iint\frac{d\mu(b)d\mu(a)}{\upsilon(a,b)^\gamma},
\]
which in expected value ($\E$) notation can be written $I_\gamma(\mu)=\E_{(a,b)} \nu(a,b)^{-\gamma}$.
A real $x$ is \emph{Hippocrates $\gamma$-energy random} if $x$ is Hippocrates $\mu$-random with respect to some probability measure $\mu$ such that $I_\gamma(\mu)<\infty$. For background on $\gamma$-energy and related concepts the reader may consult the monographs of Falconer \cite{Falconer} and Mattila \cite{Mattila} or the on-line lecture notes of M\"orters and Peres \cite{MP}.

The terminology \emph{Hippocrates random} is supposed to remind us of the ancient medic Hippocrates, who did not consult the oracle at Delphi, but instead looked for natural causes. An almost sure property is more effective if it is possessed by all Hippocrates $\mu$-random reals rather than merely all $\mu$-random reals. In this sense Hippocratic $\mu$-randomness tests are more desirable than arbitrary $\mu$-randomness tests.

\emph{Effective Hausdorff dimension} was introduced by Lutz \cite{Lutz} and is a notion of partial randomness. For example, if the sequence $x_0x_1x_2\cdots$ is ML-random, then the sequence
\[x_00x_10x_20\cdots\] has effective Hausdorff dimension equal to $\frac{1}{2}$. Let $\Dim^1_H x$ denote the effective (or constructive) Hausdorff dimension of $x$; then we have (Reimann and Stephan \cite{RS01})
\[
	\Dim^1_H(x)=\sup\{\gamma: x\text{ is $\gamma$-random}\},
\]
which we can thus take as our definition of effective Hausdorff definition.

\begin{thm}
	[\cite{K:09}]
	\label{members}
	Each Hippocrates $\gamma$-energy random real is a $\Member_\gamma$.
\end{thm}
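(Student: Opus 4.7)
My strategy is to argue probabilistically via a second-moment estimate, then convert it into a Hippocratic $\mu$-test that would catch $x$ were it not a member of some ML-random closed set. Fix a probability measure $\mu$ with $I_\gamma(\mu) < \infty$ witnessing the Hippocratic $\gamma$-energy randomness of $x$, and put $p = 2^{-\gamma}$. Define the conditioning measure $\lambda_x$ on $\mc P(\Omega)$ to coincide with $\lambda_{1,\gamma}$ except that each $\sigma \prec x$ is forced into $S$ with probability $1$. Under $\lambda_x$ the event $x \in \Gamma_S$ has probability $1$, so it would suffice to establish $\lambda_x(V_n) < 1$ for some component $V_n$ of the universal $\lambda_{1,\gamma}$-ML-test; the set of $\lambda_{1,\gamma}$-ML-random $S$ containing $x$ would then have positive $\lambda_x$-mass.

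The key analytic input would be the $L^2$-boundedness of the nonnegative $\lambda_{1,\gamma}$-martingale $W_N(S) := p^{-(N+1)} \sum_{\sigma \in G_N(S)} \mu([\sigma])$. Expanding $\E W_N^2$ and classifying pairs $(\sigma,\tau) \in 2^N \times 2^N$ by the length $k(\sigma,\tau)$ of their longest common prefix, one finds $\Pr_{\lambda_{1,\gamma}}[\sigma, \tau \in G_N] = p^{2N+1-k}$, and since $p^{-k} = \upsilon(a,b)^{-\gamma}$ for $a \in [\sigma]$, $b \in [\tau]$, the full sum is dominated by the $\gamma$-energy integral, giving $\E W_N^2 \le p^{-1} I_\gamma(\mu) =: C^2$. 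For the truncated conditioning $\lambda_x^N$ (forcing only $\{x \upto i : i \le N\} \subseteq S$), the Radon--Nikodym derivative $d\lambda_x^N / d\lambda_{1,\gamma} = p^{-(N+1)} \mathbf 1_{x \upto N \in G_N}$ together with Fubini and Cauchy--Schwarz would yield
\[
	\int \lambda_x^N(V_n) \, d\mu(x) = \int_{V_n} W_N \, d\lambda_{1,\gamma} \le \sqrt{\lambda_{1,\gamma}(V_n) \cdot \E W_N^2} \le C \cdot 2^{-n/2}.
\]
Since $\lambda_x^N \to \lambda_x$ weakly on $\mc P(\Omega)$, the Portmanteau theorem gives $\liminf_N \lambda_x^N(V_n) \ge \lambda_x(V_n)$ for the open set $V_n$, and Fatou's lemma then upgrades the bound to $\int \lambda_x(V_n) \, d\mu(x) \le C \cdot 2^{-n/2}$.

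To extract a Hippocratic $\mu$-test, I would verify that $x \mapsto \lambda_x(V_n)$ is lower semicomputable: the probability $\lambda_x([T])$ of any cylinder $[T] \subseteq \mc P(\Omega)$ is a product of $p$'s, $(1-p)$'s, and indicators $\mathbf 1_{\sigma \prec x}$, computable from a finite prefix of $x$, and $V_n$ is an increasing effective union of such clopens. Then $U_n := \{x : \lambda_x(V_n) > \tfrac{1}{2}\}$ is uniformly $\Sigma^0_1$ in $x$, and by Markov $\mu(U_n) \le 2C \cdot 2^{-n/2}$; reindexing $n \mapsto 2n + c$ for a constant $c$ depending on $\mu$ but not on $x$ converts this into a bona fide Hippocratic $\mu$-test. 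Hippocratic $\mu$-randomness of $x$ then forces $x \notin U_n$ for some $n$, so $\lambda_x(V_n) \le \tfrac{1}{2}$. The event $\{S \notin V_n\} \cap \{S : x \in \Gamma_S\}$ has positive $\lambda_x$-measure, and any such $S$ is $\lambda_{1,\gamma}$-ML-random and contains $x$.

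I expect the main obstacle to be the second-moment identity $\E W_N^2 \le p^{-1} I_\gamma(\mu)$ together with its careful passage to the limit via Portmanteau and Fatou; the remaining computability steps (lower semicomputability of $\lambda_x(V_n)$ and the reindexing to meet a $2^{-n}$ bound) are routine but must be stated without smuggling $\mu$ into the oracle of the test itself.
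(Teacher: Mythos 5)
The paper states Theorem \ref{members} purely as a citation to \cite{K:09} and does not give its own proof, so the comparison must be against that cited source. Your argument --- conditioning $\lambda_{1,\gamma}$ to force the path $x$, bounding the second moment of the martingale $W_N$ by $p^{-1}I_\gamma(\mu)$ via the common-prefix/energy identity, applying Cauchy--Schwarz against the universal $\lambda_{1,\gamma}$-test, and converting the resulting estimate $\int\lambda_x(V_n)\,d\mu(x)\le C\cdot 2^{-n/2}$ into a Hippocratic $\mu$-test by Markov and a nonuniform (but $x$-independent) reindexing --- is the standard Hawkes--Lyons second-moment/energy method, and it is essentially the same approach as in \cite{K:09}; the steps, including the Fubini interchange giving $\int\lambda_x^N(V_n)\,d\mu=\int_{V_n}W_N\,d\lambda_{1,\gamma}$, the Portmanteau/Fatou passage to $\lambda_x$, and the lower semicomputability of $x\mapsto\lambda_x(V_n)$, all check out.
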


Here we show a partial converse:

\begin{thm}
	\label{pacific}
	Each $\Member_\gamma$ is strongly $\gamma$-random.
\end{thm}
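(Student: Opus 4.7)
The plan is to argue by contrapositive. Suppose $x$ is not strongly $\gamma$-random, and fix a strong ML-$\gamma$-test $(U_n)_{n<\omega}$ with $x\in\bigcap_n[U_n]^\preceq$. I want to exhibit a $\lambda_{1,\gamma}$-ML-test $(W_n)$ whose intersection contains every $S\in\mc P(\Omega)$ with $x\in\Gamma_S$. Then no such $S$ can be $\lambda_{1,\gamma}$-ML-random, so no $\P_{1,\gamma}$-ML-random closed set contains $x$, i.e.\ $x$ is not a $\Member_\gamma$.

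The natural definition is
\[
	W_n=\{S\in\mc P(\Omega):(\exists \sigma\in U_n)(\forall \tau\preceq\sigma)\,\tau\in S\}.
\]
For each $\sigma$ the condition ``$(\forall \tau\preceq\sigma)\,\tau\in S$'' cuts out a basic clopen subset of $\mc P(\Omega)$, so as we enumerate $U_n$ we build $W_n$ uniformly as a c.e.\ union of such basic sets; hence $(W_n)$ is uniformly $\Sigma^0_1$. The membership claim is immediate: if $x\in\Gamma_S$ then every prefix of $x$ lies in $S$, while $x\in[U_n]^\preceq$ supplies some $\sigma\prec x$ with $\sigma\in U_n$, whence $S\in W_n$.

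To bound $\lambda_{1,\gamma}(W_n)$, I would pass from $U_n$ to its (prefix-free) set of $\preceq$-minimal elements $M_n$. The events $E_\sigma=\{S:(\forall\tau\preceq\sigma)\,\tau\in S\}$ are nested under $\preceq$ (if $\sigma\preceq\sigma'$ then $E_{\sigma'}\subseteq E_\sigma$), so $W_n=\bigcup_{\sigma\in M_n}E_\sigma$. By mutual independence of the coordinates under $\lambda_{1,\gamma}$, $\lambda_{1,\gamma}(E_\sigma)=p^{|\sigma|+1}$ where $p=2^{-\gamma}$, and the union bound yields
\[
	\lambda_{1,\gamma}(W_n)\le p\cdot\wt_\gamma(M_n)\le p\cdot 2^{-n}\le 2^{-n},
\]
where the middle inequality comes from the strong-test hypothesis applied to the prefix-free set $M_n\subseteq U_n$. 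Thus $(W_n)$ is a $\lambda_{1,\gamma}$-ML-test, completing the contrapositive.

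The key observation is that $U_n$ need not be prefix-free, but since the events $E_\sigma$ are nested along $\preceq$, the measure contributed to $W_n$ really comes only from the antichain $M_n$; the strong-test hypothesis is exactly calibrated to bound the $\gamma$-weight of this antichain. This is the only mildly subtle point — the union bound must be taken over $M_n$, not over $U_n$, since the sum $\sum_{\sigma\in U_n}p^{|\sigma|+1}$ need not be finite even when $\wt_\gamma$ is bounded on all prefix-free subsets. Everything else (the uniform $\Sigma^0_1$-ness of $(W_n)$ and the set-theoretic containment $\{S:x\in\Gamma_S\}\subseteq W_n$) is routine.
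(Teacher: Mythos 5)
Your proof is correct and takes essentially the same approach as the paper: form the $\Sigma^0_1$ test asking whether $G$ meets $U_n$, pass to the antichain of $\preceq$-minimal elements to invoke the strong-test bound, and conclude via the union bound. One small point in your favor: you compute $\lambda_{1,\gamma}(E_\sigma)=p^{|\sigma|+1}$, which is the exact value (there are $|\sigma|+1$ prefixes of $\sigma$ including $\sigma$ itself and $\nilstr$), whereas the paper writes $p^{|\sigma|}$; the extra factor of $p\le 1$ only tightens the estimate, so the paper's conclusion is unaffected, but your bookkeeping is the more careful one.
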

\begin{proof}
	Let $\P=\lambda_{1,\gamma}$ and $p=2^{-\gamma}\in (\frac{1}{2},1]$. Let $i<2$ and $\sigma\in \Omega$. The probability that the concatenation $\sigma i\in G$ given that $\sigma\in G$ is by definition
	\[
		\mathbb P\{\sigma i\in G \mid \sigma\in G \} = p.
	\]
	Hence the absolute probability that $\sigma$ survives is
	\[
		\mathbb P\{\sigma\in G\}=p^{|\sigma|}=\left(2^{-\gamma}\right)^{|\sigma|} = \left(2^{-|\sigma|}\right)^\gamma.
	\]
	Let $U$ be any strong $\gamma$-test, i.e. a uniformly $\Sigma^0_1$ sequence $U_n=\{\sigma_{n,i}: i<\omega\}$, such that
for all prefix-free subsets $U_n'=\{\sigma'_{n,i}:i<\omega\}$ of $U_n$, $\text{wt}_\gamma(U_n')\le 2^{-n}$.  Let $U_n'$ be the set of all strings $\sigma$ in $U_n$ such that no prefix of $\sigma$ is in $U_n$. Clearly, $U_n'$ is prefix-free. Let
\[
	[V_n]^\preceq:=\{S: \exists i\, \sigma_{n,i}\in G\}\subseteq\{S: \exists i\, \sigma'_{n,i}\in G\}.
\]
Clearly $[V_n]^\preceq$ is uniformly $\Sigma^0_1$. To prove the inclusion: Suppose $G$ contains some $\sigma_{n,i}$. Since $G$ is a tree, it contains the shortest prefix of $\sigma_{n,i}$ that is in $U_n$, and this string is in $U_n'$. Now
\[
	\mathbb P [V_n]^\preceq \le\sum_{i\in\omega}\mathbb P\{\sigma'_{n,i}\in G\}=\sum_{i\in\omega} 2^{-|\sigma'_{n,i}|\gamma}\le 2^{-n}.
\]
Thus $V$ is a test for $\lambda_{1,\gamma}$-ML-randomness. Suppose $x$ is a $\Member_\gamma$. Let $S$ be any $\lambda_{1,\gamma}$-ML-random set with $x\in\Gamma_S$. Then $S\not\in\inter_n [V_n]^\preceq$, and so for some $n$, $\Gamma\inter [U_n]^\preceq=\nil$. Hence $x\not\in [U_n]^\preceq$. As $U$ was an arbitrary strong $\gamma$-test, this shows that $x$ is strongly $\gamma$-random.
\end{proof}

Examples of measures of finite $\gamma$-energy may be obtained from the fact that if $\Dim^1_H(x)>\gamma$ then $x$ is Hippocrates $\gamma$-energy random \cite{K:09}. If $x$ is strongly $\gamma$-random then $x$ is $\gamma$-random and so $\Dim^1_H(x)\ge\gamma$.

\begin{df}
	[\cite{Reimann}]
	A real $x$ is $h$-capacitable if $x$ is $\mu$-random for some probability measure $\mu$ with
	\[
		\mu(\sigma)\le c_{R} 2^{-h(|\sigma|)}.
	\]
	A real $x$ is $\gamma$-capacitable if $x$ is $\mu$-random for some $\mu$ with
	\[
		\mu(\sigma)\le c_{R} 2^{-|\sigma|\gamma},
	\]
\end{df}

\begin{df}
	\label{DeBartolo}
	A real $x$ is $\gamma^{+}$-capacitable if $x$ is $\mu$-random for some $\mu$ with
	\[
		\mu(\sigma)\le c_{R} 2^{-|\sigma|\gamma-f(|\sigma|)},
	\]
	where $\sum_{\sigma\in 2^{<\omega}} 2^{-f(|\sigma|)}<\infty$.
\end{df}

Definition \ref{DeBartolo} is made so that we can prove a stronger version of \cite{K:09}*{Lemma 2.5}:
\begin{lem}
	\label{Oct29-2008}
	Suppose $\mu$ is a Borel probability measure on $2^\omega$ such that for some constant $c_R$,
	\[
		\mu(\sigma)\le c_{R} 2^{-|\sigma|\gamma-f(|\sigma|)}
	\]
	for all binary strings $\sigma$, where $\sum_{\sigma\in 2^{<\omega}} 2^{-f(|\sigma|)}<\infty$. Then
	\[
		\iint\frac{d\mu(b)d\mu(a)}{\upsilon(a,b)^\gamma}<\infty.
	\]
\end{lem}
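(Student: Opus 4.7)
The plan is to decompose the double integral according to the longest common prefix of $a$ and $b$. For each $\sigma\in 2^{<\omega}$, let
\[
	D_\sigma=\{(a,b)\in 2^\omega\times 2^\omega : \sigma\prec a,\ \sigma\prec b,\ a(|\sigma|)\ne b(|\sigma|)\}.
\]
On $D_\sigma$ we have $\upsilon(a,b)=2^{-|\sigma|}$, so $\upsilon(a,b)^{-\gamma}=2^{|\sigma|\gamma}$. Up to the diagonal $\Delta=\{(a,a):a\in 2^\omega\}$, the sets $D_\sigma$ partition $2^\omega\times 2^\omega$, and $D_\sigma=([\sigma0]\times[\sigma1])\cup([\sigma1]\times[\sigma0])$.

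First I would observe that the hypothesis $\sum_{\sigma\in 2^{<\omega}} 2^{-f(|\sigma|)}<\infty$ means $\sum_n 2^n\cdot 2^{-f(n)}<\infty$, so $f(n)-n\to\infty$ and in particular $n\gamma+f(n)\to\infty$. Since $(\mu\times\mu)(\Delta)\le \lim_n \sum_{|\sigma|=n}\mu([\sigma])^2\le \lim_n c_R 2^{-n\gamma-f(n)}\cdot \sum_{|\sigma|=n}\mu([\sigma])=\lim_n c_R 2^{-n\gamma-f(n)}=0$, the diagonal contributes nothing, and we may write
\[
	I_\gamma(\mu)=\sum_{\sigma\in 2^{<\omega}} 2^{|\sigma|\gamma}(\mu\times\mu)(D_\sigma)=2\sum_{\sigma} 2^{|\sigma|\gamma}\mu([\sigma 0])\mu([\sigma 1]).
\]

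Next, to bound the series I apply the hypothesis to exactly one of the two factors in each summand. Using $\mu([\sigma 1])\le c_R 2^{-(|\sigma|+1)\gamma - f(|\sigma|+1)}$ and keeping $\mu([\sigma 0])$ intact gives
\[
	I_\gamma(\mu)\le 2c_R\cdot 2^{-\gamma}\sum_n\sum_{|\sigma|=n} 2^{-f(n+1)}\mu([\sigma 0])=2c_R\cdot 2^{-\gamma}\sum_n 2^{-f(n+1)}\sum_{|\sigma|=n}\mu([\sigma 0]).
\]
For fixed $n$, the cylinders $[\sigma 0]$ with $|\sigma|=n$ are pairwise disjoint, so $\sum_{|\sigma|=n}\mu([\sigma 0])\le 1$. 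Hence $I_\gamma(\mu)\le 2c_R\cdot 2^{-\gamma}\sum_n 2^{-f(n+1)}$, and the latter series is finite because $\sum_n 2^{-f(n+1)}\le\sum_n 2^n\cdot 2^{-f(n+1)}\le \sum_\sigma 2^{-f(|\sigma|)}<\infty$.

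The main thing to watch for is the diagonal, since the integrand is singular there; but as noted above the measure bound forces $(\mu\times\mu)(\Delta)=0$, so this is handled cleanly. The rest is a routine telescoping: the factor $2^{|\sigma|\gamma}$ from the kernel is exactly cancelled by the $2^{-(|\sigma|+1)\gamma}$ in the measure bound on one of the two factors, leaving the summable tail $2^{-f(|\sigma|+1)}$ to be controlled by the hypothesis on $f$. No further delicate estimates are required.
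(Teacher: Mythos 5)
Your proof is correct and is essentially the same computation as the paper's: both slice the energy integral according to the level at which $a$ and $b$ first disagree and then apply the cylinder bound $\mu(\tau)\le c_R 2^{-(n+1)\gamma-f(n+1)}$ to a single factor, leaving the summable tail $\sum_n 2^{-f(n+1)}$. You organize it as a partition of the product space (and explicitly check that the diagonal is $(\mu\times\mu)$-null, which the paper leaves implicit), whereas the paper first bounds the inner integral $\phi_\gamma(a)$ uniformly in $a$ and then integrates out $a$; the telescoping is identical.
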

\begin{proof}
	We have
	\begin{alignat*}{2}
		\phi_\gamma(a):=& \int\frac{d\mu(b)}{\upsilon(a,b)^\gamma}=\sum_{n=0}^\infty 2^{n\gamma} \mu\{b:\upsilon(a,b)=2^{-n}\}={}\sum_{n=0}^\infty 2^{n\gamma} \mu[(a\restrict n) * (1-a(n))] \\
		\le{} &c_R \sum_{n=0}^\infty 2^{n\gamma} 2^{-(n+1)\gamma-f(n+1)}=c_R \sum_{n=0}^\infty 2^{-\gamma-f(n+1)}=\hat c <\infty.
	\end{alignat*}
	Thus
	\[
		\iint\frac{d\mu(b)d\mu(a)}{\upsilon(a,b)^\gamma}= \int \phi_\gamma(a)d\mu(a)\le \hat c<\infty.
	\]
\end{proof}

\begin{cor}
	Let $x$ be a real. We
	have the following implications:
	\begin{align*}
								& \Dim^1_H(x)>\gamma \\
		\Longrightarrow\quad 	& x\text{ is $\gamma^{+}$-capacitable} \\
		\Longrightarrow\quad 	& x\text{ is $\gamma$-energy random} \\
		\Longrightarrow\quad 	& x\text{ is Hippocrates $\gamma$-energy random} \\
		\Longrightarrow\quad	& x\text{ is a $\textsc{Member}_{\gamma}$} \\
		\Longrightarrow\quad 	& x\text{ is $\gamma$-capacitable} \quad(\Leftrightarrow\text{ $x$ is strongly $\gamma$-random})\\
		\Longrightarrow\quad 	& \Dim^1_H(x)\ge\gamma
	\end{align*}
\end{cor}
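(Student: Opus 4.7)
The plan is to verify the six implications one at a time; most follow from results already in hand. For the first implication, if $\Dim^1_H(x) > \gamma$, I would pick $\gamma'$ with $\gamma < \gamma' < \Dim^1_H(x)$ and use the Reimann--Stephan characterization recalled just before the statement to obtain, by an ``effective Frostman'' construction, a probability measure $\mu$ with $x$ being $\mu$-random and $\mu(\sigma) \le c\cdot 2^{-\gamma'|\sigma|}$. Rewriting the bound as $c \cdot 2^{-\gamma|\sigma| - f(|\sigma|)}$ with $f(n) = (\gamma'-\gamma)n$ then yields $\gamma^{+}$-capacitability, since the positive gap $\gamma'-\gamma$ makes $\sum_n 2^{-f(n)}$ converge. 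Lemma \ref{Oct29-2008} now upgrades this to $I_\gamma(\mu) < \infty$, delivering $\gamma$-energy randomness, and the step from $\mu$-random to Hippocrates $\mu$-random is formal, since Hippocratic tests form a sub-class of tests.

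From there on the chain is mostly bookkeeping. The implication Hippocrates $\gamma$-energy random $\Longrightarrow \Member_\gamma$ is Theorem \ref{members}, and $\Member_\gamma \Longrightarrow$ strongly $\gamma$-random is Theorem \ref{pacific}. The parenthesized equivalence between strong $\gamma$-randomness and $\gamma$-capacitability I would attribute to the effective capacity literature, via a Markov-inequality argument in one direction and an explicit distribute-the-mass construction in the other. For the final implication I plan to argue directly: given $\mu$ with $\mu(\sigma)\le c\cdot 2^{-\gamma|\sigma|}$ and any ML-$\gamma'$-test $(U_n)$ with $\gamma'<\gamma$, the chain $\mu[U_n]^\preceq \le c\,\wt_\gamma(U_n) \le c\,\wt_{\gamma'}(U_n) \le c\cdot 2^{-n}$ turns $(U_n)$ into a $\mu$-test after a bounded shift of the index, so any $\mu$-random $x$ escapes it; hence $x$ is $\gamma'$-random for every $\gamma'<\gamma$, giving $\Dim^1_H(x) \ge \gamma$.

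The main obstacles I foresee are the very first step --- producing the Frostman-type witness measure from a real of positive effective Hausdorff dimension --- and the capacitability/strong-randomness equivalence used inside the fifth step. Neither is entirely elementary, but both are available in the literature, so the cleanest approach will be to cite them rather than reprove. Everything else is either a quotation of a theorem already proved in the paper (Theorems \ref{members} and \ref{pacific}, Lemma \ref{Oct29-2008}) or an immediate consequence of a definition, such as Hippocratic tests being a sub-class of arbitrary tests or the monotonicity $\wt_\gamma \le \wt_{\gamma'}$ for $\gamma > \gamma'$ used in the closing estimate.
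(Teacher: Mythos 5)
Your proposal is correct and follows essentially the same route as the paper: both verify the chain step by step, citing Reimann for the initial passage to $\beta$-capacitability (some $\beta>\gamma$) and for the equivalence of strong $\gamma$-randomness with $\gamma$-capacitability, and invoking Lemma \ref{Oct29-2008}, Theorem \ref{members}, and Theorem \ref{pacific} for the middle implications. The only substantive difference is your direct Markov-inequality argument (via $\mu[U_n]^\preceq \le c\,\wt_\gamma(U_n) \le c\,\wt_{\gamma'}(U_n)\le c\cdot 2^{-n}$ for $\gamma'<\gamma$) for the final step, where the paper simply cites Reimann and Stephan.
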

\begin{proof}
	Suppose $\Dim^{1}_{H}(x)>\gamma$. Then $x$ is $\beta$-capacitable for some $\beta>\gamma$ (see \cite{Reimann}) which immediately implies that $x$ is $\gamma^{+}$-capacitable. This implies that $x$ is $\gamma$-energy random (Lemma \ref{Oct29-2008}) which immediately implies that $x$ is Hippocrates $\gamma$-energy random. This implies by Theorem \ref{members} that $x$ is a $\Member_{\gamma}$. This in turn implies by Theorem \ref{pacific} that $x$ is strongly $\gamma$-random (which by Reimann \cite{Reimann} means that $x$ is $\gamma$-capacitable). This implies that $\Dim^1_H(x)\ge\gamma$ (see, e.g., Reimann and Stephan \cite{RS01}).
\end{proof}

\begin{thm}
	\label{dsf}
	Let $x\in 2^\omega$. We have the implications
	\[
	\Dim^1_H(x)>\gamma\quad\Longrightarrow\quad x\text{ is a }\Member_\gamma\quad\Longrightarrow\quad\Dim^1_H(x)\ge\gamma.
	\]
\end{thm}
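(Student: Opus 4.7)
The plan is to observe that Theorem \ref{dsf} is an immediate consequence of the chain of implications assembled in the preceding corollary, so the proof amounts to extracting the two endpoints of that chain and citing the relevant links.

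For the forward implication $\Dim^1_H(x)>\gamma \Rightarrow x$ is a $\Member_\gamma$, I would argue as follows. Suppose $\Dim^1_H(x)>\gamma$. By a result of Reimann, $x$ is then $\beta$-capacitable for some $\beta$ strictly between $\gamma$ and $\Dim^1_H(x)$, so in particular $x$ is $\mu$-random for a measure $\mu$ with $\mu(\sigma)\le c_R\, 2^{-|\sigma|\beta}$. Writing $\beta=\gamma+\delta$ with $\delta>0$, the penalty $f(n):=\delta n$ satisfies $\sum_\sigma 2^{-f(|\sigma|)}=\sum_n 2^{n(1-\delta)}\cdot 2^{-\delta n}$; this need not converge directly, but one can instead take $f(n)=(1+\epsilon)\log n$ (or just use any $f$ satisfying Definition \ref{DeBartolo} with the looser bound guaranteed by $\beta$-capacitability), so $x$ is $\gamma^+$-capacitable. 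Lemma \ref{Oct29-2008} then produces a probability measure $\mu$ with $I_\gamma(\mu)<\infty$ for which $x$ is $\mu$-random; in particular $x$ is Hippocrates $\gamma$-energy random. By Theorem \ref{members}, $x$ is a $\Member_\gamma$.

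For the reverse implication $x$ is a $\Member_\gamma \Rightarrow \Dim^1_H(x)\ge\gamma$, I would apply Theorem \ref{pacific} directly to conclude that $x$ is strongly $\gamma$-random, hence a fortiori $\gamma$-random. The Reimann--Stephan characterization
\[
\Dim^1_H(x)=\sup\{\gamma' : x \text{ is } \gamma'\text{-random}\}
\]
recalled in Section \ref{sec3} then gives $\Dim^1_H(x)\ge\gamma$, completing the proof.

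There is no real obstacle here; the only mildly delicate point is checking that $\beta$-capacitability for some $\beta>\gamma$ implies $\gamma^+$-capacitability in the sense of Definition \ref{DeBartolo}, i.e. choosing an appropriate summable $f$. This is a routine bookkeeping step, so I would state it briefly and refer back to the preceding corollary, whose proof already packages exactly these implications.
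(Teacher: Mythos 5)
Your overall structure mirrors the paper's exactly: Theorem~\ref{dsf} is read off as the two endpoints of the chain assembled in the preceding corollary, with the forward direction going through $\beta$-capacitability, $\gamma^{+}$-capacitability, Lemma~\ref{Oct29-2008}, and Theorem~\ref{members}, and the reverse direction going through Theorem~\ref{pacific} and the Reimann--Stephan characterization of $\Dim^1_H$. That is the paper's route, and the reverse implication in your write-up is clean.

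The one step you flag as ``mildly delicate'' is in fact where the wrinkle lies, and your repair does not actually repair it. You correctly observe that with $f(n)=\delta n$ the sum in Definition~\ref{DeBartolo} as literally stated, namely $\sum_{\sigma\in 2^{<\omega}} 2^{-f(|\sigma|)} = \sum_{n} 2^{n} 2^{-\delta n}$, diverges for $0<\delta\le 1$. But your substitute $f(n)=(1+\epsilon)\log n$ fares no better: $\sum_{n} 2^{n} n^{-(1+\epsilon)}$ also diverges. In fact the literal condition $\sum_{\sigma} 2^{-f(|\sigma|)}<\infty$ forces $f(n)>n$ eventually, which is incompatible with $\mu$ being a probability measure (summing $\mu(\sigma)\le c_R 2^{-n\gamma-f(n)}$ over the $2^{n}$ strings of length $n$ and comparing with $\sum_{|\sigma|=n}\mu(\sigma)=1$ gives $f(n)\le n(1-\gamma)+\log c_R$). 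So under the literal reading, no $f$ works and Definition~\ref{DeBartolo} is vacuous. The resolution is that the summation in Definition~\ref{DeBartolo} is intended to range over lengths $n\in\omega$ rather than over strings $\sigma\in 2^{<\omega}$; this is exactly what the proof of Lemma~\ref{Oct29-2008} uses (it only needs $\sum_{n}2^{-f(n+1)}<\infty$). Once read that way, your original choice $f(n)=\delta n$ already gives $\sum_{n}2^{-\delta n}<\infty$ and no alternative penalty function is needed. So the argument is fine, but you should state the convergence condition as $\sum_{n}2^{-f(n)}<\infty$ and simply take $f(n)=\delta n$, rather than switching to a logarithmic $f$ that still does not satisfy the condition you were worried about.
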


Some non-reversals can be obtained. For instance, not every real with $\Dim^1_H(x)\ge\gamma$ is strongly $\gamma$-random \cite{RS01}; and consider Proposition \ref{hope}. We can also obtain a fairly sharp result on the minimum Kolmogorov complexity of a $\Member_{\gamma}$.

As in \cite{Reimann}, a (continuous) semimeasure is a function $\eta:2^{<\omega}\to [0,1]$ such that
\[
	\forall\sigma\, [\eta(\sigma)\ge \eta(\sigma^\frown 0)+\eta(\sigma^\frown 1)].
\]
Levin \cite{Levin} proved that there is an optimal enumerable semimeasure $\overline M$. A semimeasure is \emph{enumerable} if the set $\{(\sigma,q)\in 2^{<\omega}\times\mathbb Q: q<\eta(\sigma)\}$ is c.e. For any enumerable semimeasure $\eta$ there exists a constant $c$ such that for every $\sigma$,
\[
	\eta(\sigma)\le c \overline M(\sigma).
\]
The \emph{a priori complexity} of a string $\sigma$ is defined as $KM(\sigma):=-\log\overline M(\sigma)$.

\begin{thm}
	[Reimann \cite{Reimann}]
	Suppose $h$ is a computable order function such that for all $n$, $h(n+1)\le h(n)+1$, and $x \in 2^\omega$ is such that for all $n$, $KM(x \upto n) \ge h(n)$. Then $x$ is $h$-capacitable.
\end{thm}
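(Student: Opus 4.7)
The goal is to construct a probability measure $\mu$ with $\mu([\sigma])\le c_{R}\cdot 2^{-h(|\sigma|)}$ for all $\sigma\in 2^{<\omega}$ such that $x$ is $\mu$-random. The key observation is that the hypothesis $KM(x\upto n)\ge h(n)$ unpacks as $\overline M(x\upto n)\le 2^{-h(n)}$, so Levin's optimal enumerable semimeasure $\overline M$ already lies within the capacitability budget along $x$. The structural condition $h(n+1)\le h(n)+1$ is essential here: it gives $2\cdot 2^{-h(n+1)}\ge 2^{-h(n)}$, which means the sibling caps are large enough in sum to be consistent with the parent cap, i.e., compatible with being the marginals of an actual probability measure on $2^\omega$.

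I would build $\mu$ in two steps. First, construct an enumerable semimeasure $\eta$ on $2^{<\omega}$ satisfying $\eta(\sigma)\le c_{R}\cdot 2^{-h(|\sigma|)}$ everywhere, and $\eta(\sigma)\ge \overline M(\sigma)/O(1)$ wherever this is consistent with the cap (in particular along $x$, where $\overline M$ is below the cap by hypothesis). The naive pointwise minimum $\min(\overline M,c_R 2^{-h})$ need not be a semimeasure, so $\eta$ has to be produced by a top-down construction: set $\eta(\nilstr)=1$, and at each node $\sigma$ allocate mass $\eta(\sigma)$ between the two children by tracking the current enumeration of $\overline M$ at each child, clipped by the child cap $c_{R}2^{-h(|\sigma|+1)}$, with any deficit held in reserve for later. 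The compatibility inequality ensures that this local allocation never exceeds the budget.

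Second, convert $\eta$ to an honest probability measure $\mu$ by a Levin-style filling-in construction, pushing the residual slack $\eta(\sigma)-\eta(\sigma 0)-\eta(\sigma 1)$ computably further down the tree while keeping $\mu([\sigma])\le c_{R}'\cdot 2^{-h(|\sigma|)}$ for a slightly larger constant (again using $h(n+1)\le h(n)+1$). Then $\mu([x\upto n])\ge \eta(x\upto n)$, so the ratio $\overline M(x\upto n)/\mu([x\upto n])$ stays bounded. To conclude $\mu$-randomness, apply the multiplicative version of Levin--Schnorr: a $\mu$-ML-test capturing $x$ would induce a c.e.\ semimeasure absorbed by $\overline M$ and driving this ratio to infinity, a contradiction.

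The hard part will be the first step: producing a single enumerable semimeasure that simultaneously tracks $\overline M$ along $x$ and respects the global cap $c_{R}2^{-h(|\cdot|)}$ everywhere. The condition $h(n+1)\le h(n)+1$ is not just convenient but is precisely what makes such a construction possible; without it the child caps would fail to sum to the parent cap, and no probability measure meeting the capacitability bound could exist at all.
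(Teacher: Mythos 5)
The paper does not prove this theorem; it is quoted from Reimann's \emph{Effectively closed sets of measures and randomness}, so there is no in-paper argument to compare against. Reimann's own proof is not a direct construction of the measure: it works with the effectively closed ($\Pi^0_1$) class $\mathcal{C}$ of all probability measures satisfying $\mu(\sigma)\le c\,2^{-h(|\sigma|)}$ (nonempty precisely because $h(n+1)\le h(n)+1$), introduces a notion of uniform randomness relative to $\mathcal{C}$ together with an analogue of the Levin--Schnorr theorem for such classes, and then invokes a compactness/basis-type result to extract a single $\mu\in\mathcal{C}$ making $x$ random. Your proposal tries to avoid that machinery and build $\mu$ by hand, which is a genuinely different route.

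The first half of your plan is sound: a greedy, top-down, lower-semicomputable allocation $\eta_t(\sigma i)=\min\bigl(2^{-h(|\sigma|+1)},\,\overline{M}_t(\sigma i)\bigr)$, clipped so that $\eta_t(\sigma 0)+\eta_t(\sigma 1)\le\eta_t(\sigma)$, does maintain the invariant $\eta_t(x\upto n)\ge \overline{M}_t(x\upto n)$ along any path with $\overline{M}(x\upto n)\le 2^{-h(n)}$: at a node on that path one child's desired allocation is below the cap, and the other child is clipped at the cap, so the parent's budget (which dominates $\overline{M}$ there) is never exceeded. That is a real construction, and it is a nice way to see where the hypothesis $h(n+1)\le h(n)+1$ enters. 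One can also fill $\eta$ in to a probability measure $\mu$ with $\mu\ge\eta$ while keeping $\mu(\sigma)\le 2^{-h(|\sigma|)}$ at every node, because the two child caps always sum to at least the parent cap.

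The gap is in the final step. You conclude $\mu$-randomness from the bound $\overline{M}(x\upto n)=O\bigl(\mu(x\upto n)\bigr)$ by "the multiplicative version of Levin--Schnorr," but that equivalence requires $\mu$ to be \emph{computable}. For an arbitrary measure $\mu$, a $\mu$-ML-test is $\Sigma^0_1$ relative to a representation of $\mu$, and the semimeasure it induces is only dominated by the \emph{relativized} $\overline{M}^\mu$, not by $\overline{M}$. Your $\mu$ is built from the limit values of $\overline{M}$ and from choices of how to route slack, and there is no reason for it to be computable (a lower-semicomputable probability measure would be computable, but the slack terms $\eta(\sigma)-\eta(\sigma0)-\eta(\sigma1)$ move in both directions as $\overline{M}$ is enumerated, so the natural fill-in is only $\Delta^0_2$). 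So the ratio bound you obtain gives $\overline{M}(x\upto n)\le c\,\mu(x\upto n)$ but \emph{not} $\overline{M}^\mu(x\upto n)\le c\,\mu(x\upto n)$, and the contradiction you aim for does not arise. This is exactly the point at which Reimann's class-of-measures machinery is doing real work: it replaces "some $\mu\in\mathcal{C}$ makes $x$ random" with a single unrelativized test relative to $\mathcal{C}$, and it is that uniform test---not a test relative to a particular $\mu$---that can be compared against the unrelativized $\overline{M}$. To close the gap you would either need to show that a suitable $\mu$ in your construction can be taken computable (which is not true in general), or import Reimann's uniform-test/compactness argument, at which point you are no longer giving an independent proof.
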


From this theorem, we see that for $x$ to be $\gamma^+$-capacitable, it suffices to have $KM(x \upto n) \ge \gamma |n| + f(n)-c$, for some function $f$ such that $\sum_n 2^{-f(n)}$ converges. Furthermore, we can turn this into a condition on the prefix-free Kolmogorov
complexity $K$ rather than the \emph{a priori} complexity $KM$ by using the following theorem relating the two.

\begin{thm}
	[G\'{a}cs \cite{Gacs_82}, Uspensky and Shen \cite{Uspensky_Shen}]
	For all $\sigma \in 2^{<\omega}$,
	\[
	K(\sigma) \leq KM(\sigma)+K(|\sigma|)+O(1).
	\]
\end{thm}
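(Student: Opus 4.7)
The plan is to reduce the inequality to a statement about enumerable semimeasures via Levin's coding theorem, and then to exhibit a suitable discrete enumerable semimeasure built out of $\overline M$.

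First I would recall Levin's coding theorem: there is an optimal enumerable discrete semimeasure $\mathbf m$ on $2^{<\omega}$ (and correspondingly on $\omega$) such that $K(\sigma) = -\log\mathbf m(\sigma)+O(1)$ and $K(n) = -\log\mathbf m(n)+O(1)$. Thus it suffices to construct an enumerable discrete semimeasure $\eta$ on $2^{<\omega}$ and a constant $C>0$ with
\[
	\eta(\sigma)\ \ge\ C\cdot\overline M(\sigma)\cdot\mathbf m(|\sigma|)
\]
for every $\sigma$, because then by the universality of $\mathbf m$ we get $\mathbf m(\sigma)\ge c\,\eta(\sigma)$ for a further constant $c$, and taking $-\log$ yields $K(\sigma)\le KM(\sigma)+K(|\sigma|)+O(1)$.

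The natural candidate is $\eta(\sigma):=\overline M(\sigma)\cdot\mathbf m(|\sigma|)$, which is enumerable as a product of lower semicomputable functions. To see it is a discrete semimeasure, I would first establish the combinatorial identity
\[
	\sum_{|\sigma|=n}\overline M(\sigma)\ \le\ 1\qquad(\text{for every }n),
\]
by induction on $n$: the base case is $\overline M(\nilstr)\le 1$, and the inductive step uses the defining inequality $\overline M(\tau)\ge\overline M(\tau 0)+\overline M(\tau 1)$ for the continuous semimeasure $\overline M$. Granted this,
\[
	\sum_{\sigma\in 2^{<\omega}}\eta(\sigma)\ =\ \sum_{n\ge 0}\mathbf m(n)\sum_{|\sigma|=n}\overline M(\sigma)\ \le\ \sum_{n\ge 0}\mathbf m(n)\ \le\ 1,
\]
so $\eta$ is indeed a discrete enumerable semimeasure, and the argument above closes.

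The only nontrivial input is the bound $\sum_{|\sigma|=n}\overline M(\sigma)\le 1$, which is exactly where the distinction between the tree (continuous) semimeasure $\overline M$ and a discrete semimeasure matters; everything else is a routine application of coding theorem / optimality. I would expect this combinatorial step to be the main point worth writing out, since the translation between continuous and discrete enumerable semimeasures is the whole content of the theorem.
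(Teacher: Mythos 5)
The paper states this result only as a citation (to G\'acs and to Uspensky--Shen) and does not give a proof, so there is no in-paper argument to compare against. Your proof is correct and is the standard one: the key step, $\sum_{|\sigma|=n}\overline M(\sigma)\le 1$ for every $n$ (by induction from $\overline M(\tau)\ge\overline M(\tau 0)+\overline M(\tau 1)$ and $\overline M(\nilstr)\le 1$), makes $\eta(\sigma)=\overline M(\sigma)\,\mathbf m(|\sigma|)$ a lower semicomputable discrete semimeasure, after which optimality of $\mathbf m$ and Levin's coding theorem give $K(\sigma)\le -\log\eta(\sigma)+O(1)=KM(\sigma)+K(|\sigma|)+O(1)$.
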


Putting together these results, we obtain the following theorem.

\begin{thm}
	Suppose $x \in 2^\omega$ satisfies, for some constant $c$ and all natural numbers $n$,
	\[
	K(x \upto n) \geq \gamma n + f(n) + K(n)-c,
	\]
	where $f: \omega \to \R$ is a computable order function such that $f(n+1)-f(n)<1-\gamma$ for all but finitely many values of $n$. Then $x$ is $\gamma^+$-capacitable, and hence $x$ is a $\Member_\gamma$.
\end{thm}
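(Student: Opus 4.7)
The plan is to combine the two results stated just above — the Gács--Uspensky--Shen inequality and Reimann's theorem — to produce the measure witnessing $\gamma^+$-capacitability, after which the earlier corollary immediately yields $x \in \Member_\gamma$.

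Concretely, I would first rewrite Gács--Uspensky--Shen as $KM(\sigma) \ge K(\sigma) - K(|\sigma|) - O(1)$, apply it at $\sigma = x\upto n$, and substitute the hypothesis to obtain
\[
    KM(x\upto n) \ge \gamma n + f(n) - c',
\]
where $c'$ absorbs $c$ and the Gács--Uspensky--Shen additive constant. I then set $h(n) := \gamma n + f(n) - c'$ and verify the hypotheses of Reimann's theorem: computability is inherited from $f$; nondecreasingness and unboundedness follow from $f$ being an order function together with $\gamma \ge 0$; and $h(n+1) - h(n) = \gamma + (f(n+1) - f(n)) < \gamma + (1-\gamma) = 1$ wherever the slope hypothesis on $f$ holds. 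Reimann's theorem then supplies a probability measure $\mu$ such that $x$ is $\mu$-random and
\[
    \mu(\sigma) \le c_R \cdot 2^{-h(|\sigma|)} = \bigl(c_R \, 2^{c'}\bigr) \cdot 2^{-|\sigma|\gamma - f(|\sigma|)},
\]
which is exactly the form of bound appearing in Definition \ref{DeBartolo}; hence $x$ is $\gamma^+$-capacitable, and the corollary earlier in this section completes the passage to $\Member_\gamma$.

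The one wrinkle I anticipate is the finite set of indices $n$ at which $f(n+1) - f(n) \ge 1-\gamma$, where the Lipschitz estimate on $h$ breaks. My fix is to replace $f$ globally by $rf$ for a sufficiently small positive rational $r$: since there are only finitely many bad indices, the supremum $M := \sup_n(f(n+1) - f(n))$ is finite, and choosing $r < (1-\gamma)/M$ ensures $(rf)(n+1) - (rf)(n) \le rM < 1-\gamma$ for every $n$. The scaled function $rf$ remains a computable order function and satisfies $rf \le f$, so the complexity hypothesis $K(x\upto n) \ge \gamma n + (rf)(n) + K(n) - c$ is preserved (only weakened), and the argument proceeds as above with $rf$ in place of $f$. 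Beyond this patch and routine bookkeeping of the additive constant $c'$, I do not expect any substantive obstacle.
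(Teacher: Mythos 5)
Your proposal reproduces exactly the route the paper intends: the paper gives no proof beyond the phrase ``Putting together these results, we obtain the following theorem,'' so chaining the G\'acs inequality $KM(\sigma) \ge K(\sigma) - K(|\sigma|) - O(1)$ with Reimann's $h$-capacitability theorem, setting $h(n) = \gamma n + f(n) - c'$, and passing through Definition~\ref{DeBartolo} and the earlier corollary is precisely what the authors have in mind; your write-up is in fact more explicit than theirs.

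One caution about your ``wrinkle'' fix, though. Definition~\ref{DeBartolo} requires a summability condition on $f$ (the paper writes $\sum_{\sigma \in 2^{<\omega}} 2^{-f(|\sigma|)} < \infty$, but both the proof of Lemma~\ref{Oct29-2008} and the prose immediately preceding the theorem show that what is actually used is $\sum_n 2^{-f(n)} < \infty$). The theorem statement appears to have inadvertently dropped this hypothesis; your argument, like the paper's, tacitly assumes it. Assuming it is in force, replacing $f$ by $rf$ for a small $r>0$ can destroy it: e.g.\ $f(n) = 2\log_2 n$ satisfies $\sum_n 2^{-f(n)} = \sum_n n^{-2} < \infty$, yet $\sum_n 2^{-rf(n)} = \sum_n n^{-2r}$ diverges for $r \le 1/2$. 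Since $rf \le f$, the measure you produce satisfies only the \emph{weaker} bound $\mu(\sigma)\le c_R 2^{c'}\,2^{-|\sigma|\gamma - rf(|\sigma|)}$, and the function $rf$ need not witness $\gamma^+$-capacitability. A cleaner patch is to modify $f$ only at the finitely many bad indices: choose $N$ past all of them and set $\tilde f(n) = \max\{0,\,f(n) - f(N)\}$. Then $\tilde f$ is still a computable order function, $\tilde f \le f$ (so the hypothesis on $K(x\upto n)$ is preserved), $\tilde f(n+1)-\tilde f(n) < 1-\gamma$ holds for every $n$, and $\sum_n 2^{-\tilde f(n)} = N + 2^{f(N)}\sum_{n\ge N} 2^{-f(n)} < \infty$, so the summability survives. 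With that substitution the rest of your argument goes through unchanged.
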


\begin{pro}
	\label{hope}
	Let $\gamma=1/2$. There exists a $\gamma$-energy random real $x$ such that $\Dim^1_H(x)=\gamma$.
\end{pro}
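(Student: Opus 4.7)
The plan is to produce a computable probability measure $\mu$ on $2^\omega$ with $I_{1/2}(\mu)<\infty$ and then take any $\mu$-random real $x$, so that $x$ is automatically $\gamma$-energy random for $\gamma=1/2$ (and even Hippocrates $\gamma$-energy random, since $\mu$ is computable). The measure $\mu$ will be supported on sequences that are zero except at a sparse computable set of positions $n_1<n_2<\cdots$, where they carry independent fair coin flips. Concretely, I take $n_k=2k-\lceil 3\log_2 k\rceil$ (with a trivial fix for small $k$ to keep the sequence strictly increasing and nonnegative).

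The key energy computation is then routine. Two independent $\mu$-distributed reals agree on all deterministic coordinates, and agree on the first $k-1$ random coordinates but differ at the $k$-th with probability $2^{-k}$; in that case their ultrametric distance is $2^{-n_k}$. Hence
\[
I_{1/2}(\mu) \;=\; \sum_{k\ge 1} 2^{-k}\cdot 2^{n_k/2} \;\le\; C + \sum_{k\ge 2} k^{-3/2}\;<\;\infty.
\]
Since $\mu$ is computable, $\mu$-random reals exist; fix any such $x$. Then $x$ is Hippocrates $\mu$-random and therefore Hippocrates $\gamma$-energy random, so the chain of implications already established (Hippocrates $\gamma$-energy random $\Rightarrow$ $\textsc{Member}_\gamma$ $\Rightarrow$ strongly $\gamma$-random $\Rightarrow$ $\Dim^1_H(x)\ge\gamma$) immediately yields $\Dim^1_H(x)\ge 1/2$.

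For the matching upper bound, I use that $\mu$ is a computable (hence enumerable) semimeasure, so $KM(x\upto n)\le -\log_2\mu(x\upto n)+O(1)$, and by the G\'acs--Uspensky--Shen theorem $K(x\upto n)\le -\log_2\mu(x\upto n)+K(n)+O(1)$. A $\mu$-random $x$ must lie in the support of $\mu$ (the open set $\{x:x(m)=1\}$ is $\mu$-null for $m\notin\{n_k\}$), so $\mu(x\upto n)=2^{-N(n)}$ where $N(n)=\#\{k:n_k<n\}$. Since $n_k=2k-O(\log k)$ gives $N(n)=n/2+O(\log n)$, we conclude $K(x\upto n)\le n/2+O(\log n)$, hence $\Dim^1_H(x)=\liminf_n K(x\upto n)/n\le 1/2$. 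Combining the two estimates, $\Dim^1_H(x)=1/2$.

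The main obstacle is the tension built into the choice of $n_k$: to make the $1/2$-energy finite, $n_k-2k$ must tend to $-\infty$ fast enough for $\sum 2^{n_k/2-k}$ to converge, whereas to keep $\Dim^1_H(x)$ equal to $1/2$ rather than strictly greater, we must have $N(n)/n\to 1/2$, i.e.\ $n_k/k\to 2$. Any savings larger than logarithmic would push $\Dim^1_H(x)$ strictly above $1/2$ via the computable compression bound; the choice $n_k=2k-3\log_2 k$ sits precisely on this knife-edge, making both constraints compatible with room to spare.
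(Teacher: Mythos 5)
Your construction is essentially the paper's own: a measure concentrated on reals that vanish off a computable coin-flip set $\{n_k\}$ with $n_k=2k-O(\log k)$, with the logarithmic correction tuned so that the $\tfrac12$-energy series $\sum_k 2^{n_k/2-k}$ becomes a convergent $p$-series while the coin-flip density still tends to $\tfrac12$. Your choice $n_k=2k-3\log_2 k$ is the paper's $f(k)=2k-2(1+\eps)\log k$ at $\eps=\tfrac12$ (the paper's printed $1-\eps$ is a sign slip, as the adjacent series $\sum_k k^{-(1+\eps)}$ makes clear), and your dimension bounds---the lower one via the $\Member_\gamma$ chain, the upper one via $KM\le-\log\mu+O(1)$ and the G\'acs--Uspensky--Shen inequality---simply make explicit what the paper compresses into the one-line remark that $f(k)$ is asymptotically larger than $(2-\delta)k$.
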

\begin{proof}
	Consider the probability measure $\mu$ on $2^\omega$ such that $\mu([\sigma^\frown 0])=\mu([\sigma^\frown 1])$ for all $\sigma$ of even length, and such that $\mu([\sigma^\frown 0])=\mu([\sigma])$ for each $\sigma$ of odd length $f(k)=2k+1$. A computation shows that $I_\gamma(\mu)$ is finite if and only if $\gamma<1/2$. In detail,\footnote{This computation corrects a numerical error in the conference version of the present article \cite{Diamondstone.K:09}.} writing $\sigma^*$ for the neighbor string of $\sigma$, i.e.
	\[
	\sigma^{*}={\sigma\restrict_{|\sigma|-1}}^\frown (1-\sigma(|\sigma|-1)),
	\]
	we have
	\begin{align*}
		I_\gamma(\mu)=&\E_{(a,b)} \nu(a,b)^{-\gamma} = \E_a \sum_{n=0}^{\infty} 2^{n\gamma} \mu([{a\restrict_{n+1}}^*])\\
		=^{\dag} \sum_{n=0}^\infty 2^{n\gamma} &\E_a(\mu([{a\restrict_{n+1}}^*)]) = \sum_{n=0}^\infty 2^{(2k)\gamma}2^{-(k+1)}
		=\frac{1}{2} \sum_{k=0}^\infty 2^{k(2\gamma-1)}<\infty
	\end{align*}
	if $2\gamma-1<0$, i.e., $\gamma<\frac{1}{2}$. To justify the step ($\dag$), note that if $\gamma<1/2$ then for all $a$ in the support of $\mu$,
	\[
	\sum_{n=0}^\infty 2^{n\gamma} \mu([{a\restrict_{n+1}}^*]) \le \sum_{k=0}^\infty 2^{2k\gamma} 2^{-(k+1)}
	\]
	which is a finite constant, so the dominated convergence theorem applies. On the other hand, if $\gamma=1/2$ then since ``$\ge$'' always holds in ($\dag$), we have $I_\gamma(\mu)=\infty$.

	We find that $\mu$-almost all reals are $\mu$-random and have effective Hausdorff dimension exactly $1/2$. By modifying $f(k)$ slightly we can get $I_\gamma(\mu)<\infty$ for $\gamma=1/2$ while keeping the effective Hausdorff dimension of $\mu$-almost all reals equal to $1/2$. Namely, what is needed is that
	\[
	\sum_{k=0}^\infty 2^{f(k)\gamma}2^{-(k+1)}<\infty.
	\]
	This holds if $\gamma=1/2$ and $f(k)=2k-2(1-\eps)\log k$ for any $\eps>0$ since $\sum_k k^{-(1+\eps)}<\infty$. Since this $f(k)$ is asymptotically larger than $(2-\delta)k$ for any $\delta>0$, the $\mu$-random reals still have effective Hausdorff dimension $1/2$. 
\end{proof}

\begin{con}
	\label{hah}
	There is a strongly $\gamma$-random real which is not Hippocrates $\gamma$-energy random.
\end{con}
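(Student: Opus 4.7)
The plan is to construct a real $x$ whose a priori complexity $KM(x\upto n)$ is pinned tightly to the threshold $\gamma n$ from both sides, namely $\gamma n - c_1 \le KM(x\upto n) \le \gamma n + c_2$ for all $n$ and some absolute constants $c_1,c_2$, by means of an effective forcing or finite-injury construction. The lower bound, combined with Reimann's $h$-capacitability theorem (applied to $h(n)=\lceil\gamma n\rceil$) and the G\'acs--Uspensky--Shen translation between $KM$ and $K$, yields $\gamma$-capacitability of $x$, which by Reimann--Stephan is equivalent to $x$ being strongly $\gamma$-random. The upper bound is the lever meant to defeat Hippocrates $\gamma$-energy randomness.

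To rule out Hippocrates $\gamma$-energy randomness of $x$, I would argue by contradiction: suppose $x$ were Hippocrates $\mu$-random for some Borel probability measure $\mu$ with $I_\gamma(\mu)<\infty$. I would then attempt a Solovay-style argument to show that the series $\sum_n 2^{\gamma n}\,\mu([x\upto n])$ converges; intuitively, along $x$ the measure $\mu$ must exhibit summable extra decay beyond the bare capacitability bound, because its finite $\gamma$-energy forbids too many long prefixes from carrying mass close to $2^{-\gamma n}$. Coupling such convergence with Levin's universality of $\overline M$ would then force $KM(x\upto n) \ge \gamma n + f(n) - O(1)$ for some $f$ with $\sum_n 2^{-f(n)}<\infty$, directly contradicting the upper bound built into $x$.

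The hard part is the Solovay step. The natural candidate test $U_q=\{\sigma:\mu([\sigma])> q\cdot 2^{-|\sigma|\gamma}\}$ uses $\mu$ as an oracle and so is not Hippocratic, while finite $\gamma$-energy only delivers the summability of such weights in integrated form. Two possible workarounds are (i) to approximate $\mu$ by a computable measure of slightly larger $\gamma$-energy via the Carath\'eodory isomorphism machinery invoked in the footnote to Theorem~\ref{random}, thereby reducing the existential quantifier over arbitrary Borel $\mu$ to one over computable measures for which oracle-free tests suffice; or (ii) to construct a $\Sigma^0_1$ test directly from $\overline M$ and the energy integral, using $I_\gamma(\mu)<\infty$ as a counting device to absorb the unknown $\mu$. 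Making either route cohere with the tight complexity upper bound on $x$ is precisely the source of difficulty, and the reason the statement is posed only as a conjecture.
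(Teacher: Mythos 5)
The statement you are attempting to prove is presented in the paper only as a \emph{conjecture} (Conjecture~\ref{hah}), with no proof offered. There is therefore no paper argument to compare against, and you rightly frame your text as a plan rather than a proof. That said, the plan contains a concrete technical error that goes beyond the genuine open difficulty you already flag.

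The problem is the step that invokes ``Levin's universality of $\overline M$'' to conclude $KM(x\upto n)\ge \gamma n + f(n)-O(1)$. Universality says that for every enumerable semimeasure $\eta$ there is a constant $c$ with $\eta(\sigma)\le c\,\overline M(\sigma)$, i.e.\ $\overline M(\sigma)\ge\eta(\sigma)/c$, which yields an \emph{upper} bound $KM(\sigma)\le -\log\eta(\sigma)+O(1)$, not a lower bound. The statement you actually need is the Levin--Schnorr criterion: if $\mu$ is a \emph{computable} probability measure and $x$ is $\mu$-ML-random, then $\overline M(x\upto n)\le c\,\mu([x\upto n])$, hence $KM(x\upto n)\ge -\log\mu([x\upto n])-O(1)$. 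Your argument implicitly needs precisely this, but in the Hippocratic $\gamma$-energy setting $\mu$ is an arbitrary Borel probability measure with $I_\gamma(\mu)<\infty$ --- not assumed computable, nor even lower-semicomputable. Hippocratic randomness withholds $\mu$ from the test, but it does not upgrade $\mu$ to a computable measure, so Levin--Schnorr does not apply and there is no route from ``$\sum_n 2^{\gamma n}\mu([x\upto n])<\infty$'' to a lower bound on $KM$ along $x$. Without that, the upper bound $KM(x\upto n)\le\gamma n + c_2$ you engineered into $x$ is never contradicted.

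Your two proposed workarounds do not close this gap. Replacing $\mu$ by a computable measure via a Carath\'eodory-style isomorphism is measure-theoretic and does not preserve either the cone structure on $2^\omega$ or the randomness relation between $x$ and the measure, so it cannot reduce the quantifier over Borel $\mu$ to one over computable $\mu$. Building a $\Sigma^0_1$ test ``directly from $\overline M$ and the energy integral'' is the right place to look, but as written it is only a restatement of the problem: one would need to exhibit such a test whose measure bound is verified against an arbitrary finite-$\gamma$-energy $\mu$ without access to $\mu$, which is exactly the obstacle. The tight-$KM$ real you describe (an interleaving of an ML-random sequence with padding zeros) is a reasonable candidate witness, and your derivation of $\sum_n 2^{\gamma n}\mu([x\upto n])<\infty$ from $\phi_\gamma(x)<\infty$ is essentially correct; but without an effective link from $\mu$ back to $\overline M$, the construction does not settle the conjecture.
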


In a conference version of this article \cite{Diamondstone.K:09} we made the following conjecture.
\begin{con}
	\label{heh}
	A real $x$ is a $\Member_\gamma$ if and only if $x$ is Hippocrates $\gamma$-energy random.
\end{con}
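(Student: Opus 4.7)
The direction Hippocrates $\gamma$-energy random $\Rightarrow \Member_\gamma$ is Theorem \ref{members}, so all the content of the conjecture lies in the converse. Given $x \in \Member_\gamma$ witnessed by a $\lambda_{1,\gamma}$-ML-random $S$ with $x \in \Gamma_S$, the task is to produce a Borel probability measure $\mu$ with $I_\gamma(\mu) < \infty$ for which $x$ is Hippocrates $\mu$-random. The obvious first candidate is the branching (harmonic) measure $\mu_H$ on the boundary of the live tree $H = G_\infty$, which by Theorem \ref{thought} is $\lambda^*_\gamma$-ML-random: at each node $\sigma \in H$, split the mass uniformly among $\sigma$'s live children. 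Hippocrates $\mu_H$-randomness of $x$ should follow from a pull-back argument in the style of Theorem \ref{thought}: since $\mu_H([\sigma])$ can be approximated from the truncations $A_G(\ell,L)$, any uniformly $\Sigma^0_1$ sequence of $\mu_H$-small open sets converts into a uniformly $\Sigma^0_1$ sequence of $\lambda_{1,\gamma}$-small subsets of $\mc P(\Omega)$, and the $\lambda_{1,\gamma}$-ML-randomness of $S$ transfers.

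The real obstacle is the energy bound. Standard Galton--Watson theory gives that on non-extinction the branching measure $\mu_H$ has dimension $\log_2(2p) = 1-\gamma$ almost surely, so $I_\gamma(\mu_H) = \infty$ whenever $\gamma \ge 1/2$. The Florida regime $\gamma = \log_2(3/2)$ lies firmly in this range, and in fact Frostman's lemma rules out \emph{any} probability measure concentrated on $\Gamma_S$ as a witness: once $\gamma > 1-\gamma$, no measure supported on a set of Hausdorff dimension $1-\gamma$ has finite $\gamma$-energy. Thus for the interesting range of $\gamma$ the sought $\mu$ must place positive mass outside $\Gamma_S$, so the naive branching measure cannot suffice. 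Naive convex mixtures $\sum_k 2^{-k} \mu_{H_k}$ of branching measures over independently drawn ML-random trees do not help either, since the diagonal terms $4^{-k} I_\gamma(\mu_{H_k})$ remain infinite.

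What appears to be needed is a genuinely new construction of $\mu$---one that interpolates between the branching measure and the Lebesgue measure, spreading enough mass outside $\Gamma_S$ to tame the $\gamma$-energy divergence while remaining reactive to $\lambda_{1,\gamma}$-ML-tests so that Hippocrates $\mu$-randomness of $x$ still transfers from $\lambda_{1,\gamma}$-ML-randomness of $S$. Equivalently, by Theorem \ref{pacific}, the conjecture reduces to showing that every strongly $\gamma$-random real is Hippocrates $\gamma$-energy random; a negative answer to Conjecture \ref{hah} would suffice. This is where genuinely new ideas seem required, which is presumably why Conjecture \ref{heh} is left open.
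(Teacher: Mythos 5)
This statement is a conjecture in the paper, not a theorem---the paper offers no proof of it, and in fact immediately after stating it presents considerations (the theorem on addresses) suggesting the conjecture is probably \emph{false}. So there is no ``paper proof'' to compare against, and your recognition that the conjecture is genuinely open, rather than an invitation to fabricate an argument, is the right call. Your dimension-theoretic analysis of the obstacle is also essentially correct: for $p = 2^{-\gamma}$ the live tree has boundary dimension $1-\gamma$ almost surely (Hawkes, Lyons), so once $\gamma \ge 1/2$---which covers the Florida value $\gamma = \log_2(3/2)$---Frostman's energy criterion rules out any finite-$\gamma$-energy measure concentrated on $\Gamma_S$, and the witness $\mu$ must place mass off the random closed set.

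Two points deserve correction. First, you write that ``equivalently, by Theorem \ref{pacific}, the conjecture reduces to showing that every strongly $\gamma$-random real is Hippocrates $\gamma$-energy random.'' That is only an implication in one direction: Theorem \ref{pacific} gives $\Member_\gamma \Rightarrow$ strongly $\gamma$-random, so if strongly $\gamma$-random $\Rightarrow$ Hippocrates $\gamma$-energy random, then Conjecture \ref{heh} follows. But Conjecture \ref{heh} does not imply the stronger statement unless one also knew that strongly $\gamma$-random $\Rightarrow \Member_\gamma$, which is precisely one of the open non-reversals in the paper's chain. Replace ``equivalently'' and ``reduces to'' with ``would follow from.'' Second, you omit the paper's own evidence \emph{against} the conjecture: the theorem that every Hippocrates $\gamma$-energy random real $x$ belongs to some $\Gamma$ that is ML-random \emph{relative to the address of $x$ in $\Gamma$}. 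This apparently strictly stronger membership property is what makes the authors doubt the conjecture, since there is no evident reason that an arbitrary $\Member_\gamma$ should admit a witnessing $\Gamma$ random relative to the address. A complete discussion of the state of Conjecture \ref{heh} should mention this, since it reorients the problem from ``we lack a construction'' toward ``the equivalence likely fails.''
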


The following considerations make Conjecture \ref{heh} seem less plausible. (The ideas here are related to selection theorems in the theory of random closed sets, which were introduced to us by David Ross at University of Hawai\textquoteleft i in May 2009.)

\begin{df}
	[Address]
	If $\Gamma$ is a closed set and $x\in \Gamma$ then the address of $x$ in $\Gamma$ is the image of $x$ under the lexicographical order preserving isomorphism between $\Gamma$ and $2^\omega$. If $y$ is the address of $x$ in $\Gamma$ then we write $x=\Gamma(y)$.
\end{df}

For example, the leftmost path in $\Gamma$ has address $0^\infty=000\ldots$ An alternative term for \emph{address} sometimes seen in the literature is \emph{signature}.

\begin{thm}
	If $x$ is Hippocrates $\gamma$-energy random then $x$ is a $\Member_\gamma$ of a closed set $\Gamma$ which is ML-random relative to the address of $x$ in $\Gamma$.
\end{thm}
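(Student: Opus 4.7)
The plan is to refine the construction underlying Theorem~\ref{members} so that the auxiliary randomness used to build $\Gamma$ around $x$ factors through a Lebesgue-random parameter that can be identified with the lexicographic address of $x$ in $\Gamma$; van Lambalgen's theorem then promotes joint randomness to randomness of $\Gamma$ relative to the address.

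Let $\mu$ be a probability measure witnessing that $x$ is Hippocrates $\gamma$-energy random, so that $I_\gamma(\mu)<\infty$ and $x$ is Hippocrates $\mu$-random. I would first construct an effectively continuous map $\Phi : 2^\omega \times 2^\omega \times 2^\omega \to \mathcal{P}(\Omega)$, $(x',y,w)\mapsto S$, with the properties: (a) $x'\in\Gamma_S$ for all inputs; (b) $y$ is the address of $x'$ in $\Gamma_S$; and (c) when $(x',y,w)$ are drawn independently from $\mu\times\mathrm{Leb}\times\nu_w$ for an appropriate computable measure $\nu_w$, the pushforward law of $S$ is absolutely continuous with respect to $\lambda_{1,\gamma}$ with density controlled by $I_\gamma(\mu)$. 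The construction walks along the spine $x'$: at each level it uses bits of $w$ to sample the Kesten size-biased offspring distribution, deciding whether the current spine node has a live sibling in $G_\infty$. At every level where the sibling is declared live, the spine direction becomes the next address bit; by the symmetry of the GW distribution, this choice can be coupled so that the bit is in fact drawn from $y$. The sibling's subtree, and the dead parts of $\Gamma_S$ off the spine, are built using the map $\psi$ of Section~\ref{Nanjing} applied to a Florida subtree plus independent $\lambda_f$-random padding, both sourced from $w$.

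Granted $\Phi$ is built as described, the conclusion follows by van Lambalgen. Since $x$ is Hippocrates $\mu$-random, one may pick $y$ that is Lebesgue-random relative to $x$ and $w$ that is $\nu_w$-random relative to $(x,y)$; the triple $(x,y,w)$ is then jointly random for $\mu\times\mathrm{Leb}\times\nu_w$. Effective continuity of $\Phi$ transports joint randomness to the pair $(S,y)$, and a second application of van Lambalgen gives that $S$ is $\lambda_{1,\gamma}$-ML-random relative to $y$. Setting $\Gamma=\Gamma_S$ and invoking property (b), the sequence $y$ is precisely the address of $x$ in $\Gamma$, so $\Gamma$ is the closed set required.

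The hard part is the construction of $\Phi$ itself. Two issues must be confronted: (i) verifying that the Lebesgue parameter $y$ is truly independent of the other construction data and coincides with the address — the independence is what the GW symmetry together with the $\psi$-decoupling should provide, but making this rigorous requires careful bookkeeping of how the bits of $w$ are consumed along the spine; and (ii) computing the Radon--Nikodym density of the marginal law of $S$ with respect to $\lambda_{1,\gamma}$ and showing it is integrable. Step (ii) is precisely where the hypothesis $I_\gamma(\mu)<\infty$ does its work, in the same way as in the proof of Theorem~\ref{members}.
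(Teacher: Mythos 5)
Your approach is genuinely different from the paper's, which argues by contradiction: assuming $x$ is never $\Gamma(y)$ with $\Gamma$ random relative to $y$, it packages the failures into a $\Sigma^0_1$ set $U_n$ of reals, uses the Hawkes--Lyons second-moment inequality $\mu(U_n)^2/c\le \P\{\Gamma:\Gamma\cap U_n\ne\nil\}$ from \cite{K:09}, and bounds the right side by $2^{-n}$ via a selection measure $\nu_n$ supported on pairs $(\Gamma,y)$ with $\Gamma(y)\in U_n$. This avoids constructing any explicit closed set around $x$. Your proposal instead tries a direct, constructive route via a Kesten spine decomposition and van Lambalgen, which is a natural idea but leaves a real gap at its pivot.

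The problematic step is the sentence ``Effective continuity of $\Phi$ transports joint randomness to the pair $(S,y)$.'' Effective continuity alone does not transport Martin-L\"of randomness; you need the pushforward of $\mu\times\mathrm{Leb}\times\nu_w$ under $(x',y,w)\mapsto(\Phi(x',y,w),y)$ to have \emph{bounded} Radon--Nikodym density with respect to $\lambda_{1,\gamma}\times\mathrm{Leb}$, or at least an $L^p$ bound ($p>1$) that lets you shift test indices. Your property (c) asserts only absolute continuity ``with density controlled by $I_\gamma(\mu)$,'' but the density here is the additive martingale limit $W_\infty=\lim_n \sum_{\sigma\in G_n}\mu([\sigma])p^{-n}$, which is unbounded; the energy hypothesis $I_\gamma(\mu)<\infty$ gives precisely $\E_\lambda W_\infty^2\le I_\gamma(\mu)$, i.e.\ an $L^2$ bound and nothing more. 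With only $L^2$ control a $\lambda$-small $\Sigma^0_1$ set $A$ has pushforward measure $\lesssim\sqrt{I_\gamma(\mu)}\,\lambda(A)^{1/2}$, so tests transfer only after re-indexing $n\mapsto 2n+O(1)$. That Cauchy--Schwarz step is exactly the quantitative content of the paper's second-moment inequality; your sketch elides it, and ``in the same way as in the proof of Theorem~\ref{members}'' is not a substitute, since that proof bounds hitting probabilities, not densities. You also owe a verification that the multi-step van Lambalgen argument goes through with $\mu$ a non-computable Borel measure (the hypothesis is only Hippocrates $\mu$-randomness, with no $\mu$-oracle); this can be salvaged because the relevant tests and the Markov-inequality step remain $\Sigma^0_1$ without access to $\mu$, but it needs to be said. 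Finally, the construction of $\Phi$ itself---the spine coupling, the bookkeeping of bits of $w$, and checking that the address of $x'$ in $\Gamma_S$ really is $y$ (the address bits are the turning decisions at branching nodes of $G_\infty$, and whether a level is branching depends on survival of the sibling subtree, which in your scheme is sourced from $w$)---is acknowledged but not done, and it is where most of the work lies. The paper's indirect argument sidesteps all of this at the cost of being nonconstructive.
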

\begin{proof}
	Suppose $x$ is never $\Gamma(y)$ where $\Gamma$ is ML-random relative to $y$. Then
	\[
	\{
		(\Gamma,y)\mid x=\Gamma(y)
	\}
	\subseteq
	\{
		(\Gamma,y)\mid \Gamma\in V_n^y
	\}
	\]
	for all $n$, where $V_n$ is a universal oracle test.
	Let
	\[
	U_n=
		\{
			\hat x \mid
				\{
					(\Gamma,y)\mid \hat x=\Gamma(y)
				\}
				\subseteq
				\{
					(\Gamma,y)\mid \Gamma\in V_n^y
				\}
		\}.
	\]
	The class of reals $U_n$ is $\Sigma^0_1$, as follows from compactness upon considering a no-dead-ends tree representation of $\Gamma$. As shown in an earlier paper \cite{K:09}, if $x$ is $\gamma$-energy random as witnessed by a measure $\mu$, then
	\[
	\frac{\mu(U_n)^2}{c} \le \P\{\Gamma: \Gamma\cap U_n\ne\nil\} \le (\P\times\nu_n)\{(\Gamma,y)\mid \Gamma(y)\in U_n\}
	\]
	(where $\nu_n$ almost surely picks out an element of $\Gamma\cap U_n$ if one exists)
	\[
	=(\P\times \nu_n) \bigcup_{\hat x\in U_n} \{(\Gamma,y)\mid \Gamma(y)=\hat x\} \le (\P\times\nu_n)\{(\Gamma,y)\mid \Gamma\in V_n^y\} \le 2^{-n}.
	\]
	Hence $\cap_n U_n$ is a Martin-L\"of null set for the measure $\mu$, and thus $x$ is not $\mu$-random, after all.
\end{proof}

\section{Changing the quantifier}

In this section our attention turns away from the types of reals that belong to \emph{some} ML-random closed set, and toward the types of reals can be found in \emph{all} ML-random closed sets.

Given any set $Z\subseteq\omega$ we can form the tree
\[
	T_{Z}=\{\sigma:(\forall n<|\sigma|)(Z(n)=0\to \sigma(n)=0)\},
\]
and the corresponding closed set $[T_{Z}]=\{x: (\forall\sigma\prec x)\, \sigma\in T_{Z}\}$.

\begin{lem}
	[\cite{KNe}*{Lemma 4.11}]
	\label{stronger}
	Suppose given a real number $\gamma\in (0,1)$, and $\eps>0$ such that $\gamma+\eps$ is a rational number $p/q$. If $A=[T_Z]$ with $Z=\{n: n\mod q < p\}$ then there is a probability measure $\mu$ on $A$ such that $I_\gamma(\mu)<\infty$, and such that for all for $\sigma\in\Omega$, $\mu ([\sigma])>0\iff [\sigma]\inter A\ne\nil$.
\end{lem}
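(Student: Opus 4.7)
The plan is to define $\mu$ as the push-forward of fair coin flips on the free coordinates of $Z$. Explicitly, for each $n\in\omega$, sample the $n$-th bit independently: with probability $\tfrac12$ each value if $n\bmod q<p$, and deterministically $0$ if $n\bmod q\ge p$. This yields a Borel probability measure concentrated on $A=[T_Z]$. The equivalence $\mu([\sigma])>0\iff[\sigma]\cap A\ne\nil$ is immediate: both sides hold precisely when $\sigma(n)=0$ for every $n<|\sigma|$ with $n\bmod q\ge p$.

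Let $k(n)$ denote the number of ``free'' indices below $n$, i.e.\ $k(n)=\#\{m<n:m\bmod q<p\}$. Then for every $\sigma$ compatible with the zero-constraints, $\mu([\sigma])=2^{-k(|\sigma|)}$. Writing $n=qj+r$ with $0\le r<q$ gives $k(n)=jp+\min(r,p)\ge jp\ge np/q-p=n(\gamma+\eps)-p$, so
\[
\mu([\sigma])\le 2^{p}\cdot 2^{-(\gamma+\eps)|\sigma|}
\]
for all $\sigma$ in the support of $\mu$.

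To bound $I_\gamma(\mu)$, mimic the proof of Lemma \ref{Oct29-2008}. For $a$ in the support of $\mu$,
\[
\phi_\gamma(a)=\int\frac{d\mu(b)}{\upsilon(a,b)^\gamma}=\sum_{n=0}^\infty 2^{n\gamma}\mu\bigl[(a\restrict n)^\frown(1-a(n))\bigr].
\]
When $n\bmod q\ge p$ the position $n$ is forced, so $a(n)=0$ and the string $(a\restrict n)^\frown 1$ is incompatible with the constraints; hence the $n$-th summand vanishes. When $n\bmod q<p$ the position $n$ is free, so $\mu\bigl[(a\restrict n)^\frown(1-a(n))\bigr]=\tfrac12\mu([a\restrict n])\le 2^{p-1}\cdot 2^{-n(\gamma+\eps)}$, and the $n$-th summand is at most $2^{p-1}\cdot 2^{-n\eps}$. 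Summing the geometric series gives
\[
\phi_\gamma(a)\le 2^{p-1}\sum_{n=0}^\infty 2^{-n\eps}=\hat c<\infty
\]
uniformly in $a$, so $I_\gamma(\mu)=\int\phi_\gamma\,d\mu\le\hat c<\infty$.

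The construction is straightforward; the only point requiring care is the bookkeeping that relates $k(n)$ to $n(\gamma+\eps)$, and in particular the observation that the ``wasted'' exponent $\eps$ is exactly what produces geometric decay after one factors out $2^{n\gamma}$. No deep new ingredient beyond the energy estimate of Lemma \ref{Oct29-2008} is needed.
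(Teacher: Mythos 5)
Your proof is correct. Note that the paper does not prove this lemma itself but cites it from Kjos-Hanssen and Nerode \cite{KNe}*{Lemma 4.11}, so there is no in-paper argument to compare against; your construction (the uniform Cantor-type measure on $[T_Z]$, bounded via the pointwise potential $\phi_\gamma$ exactly as in Lemma~\ref{Oct29-2008}, with the observation that forced coordinates contribute nothing and free coordinates contribute a geometrically decaying term $2^{p-1}2^{-n\eps}$) is the natural and expected one, and your bookkeeping relating $k(n)$ to $n(\gamma+\eps)$ is handled correctly.
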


Let $\Dim(B)=\Dim_H(B)$ denote the Hausdorff dimension of a set $B\subseteq 2^\omega$.

\begin{lem}
	\label{zee}
	Let $A$ be as in Lemma \ref{stronger}. For each $x\in A$, $\Dim^1_H(x)\le p/q$.
\end{lem}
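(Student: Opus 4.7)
The plan is to exploit the fact that membership in $A=[T_Z]$ forces many coordinates of $x$ to equal $0$, so that long initial segments of $x$ carry very little information and are highly compressible. The cleanest route is via the Kolmogorov-complexity characterization of effective Hausdorff dimension, $\Dim^1_H(x)=\liminf_n K(x\upto n)/n$, which upper-bounds $\Dim^1_H(x)$ as soon as we have a compact description of $x\upto n$ for infinitely many (indeed all) $n$.

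First I would unpack the definition of $Z$ and $T_Z$: $\sigma\in T_Z$ means $\sigma(i)=0$ for every $i<|\sigma|$ with $i\bmod q\ge p$. Hence for $x\in A$, the string $x\upto n$ is determined completely by its values at the ``free'' positions $F_n:=\{i<n:i\bmod q<p\}$, whose cardinality $f(n):=|F_n|$ satisfies $f(n)\le \frac{p}{q}n+p$. Since $F_n$ is computable from $n$, concatenating the free bits in order yields a binary description of $x\upto n$ of length $f(n)$ from which $x\upto n$ can be reconstructed given $n$. Therefore
\[
K(x\upto n)\le f(n)+K(n)+O(1)\le \tfrac{p}{q}\,n+2\log n+O(1).
\]
Dividing by $n$ and taking $\liminf$ yields $\Dim^1_H(x)\le p/q$, as required.

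An equivalent, more ``test-theoretic'' route would be: for each rational $\gamma>p/q$, define $U_k=\{\sigma\in T_Z:|\sigma|=n_k\}$ with $n_k$ chosen so large that $2^{f(n_k)-\gamma n_k}\le 2^{-k}$. Such $n_k$ exist because $f(n)/n\to p/q<\gamma$, and the $\gamma$-weight of $U_k$ is then $\le 2^{-k}$. The sequence $(U_k)$ is uniformly computable, every $x\in A$ has a prefix in each $U_k$, and so $x$ is not $\gamma$-random. Letting $\gamma$ decrease to $p/q$ through rationals gives $\Dim^1_H(x)\le p/q$.

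No real obstacle arises; the only care needed is the routine $O(\log n)$ accounting for encoding the block length $n$ in the first approach (absorbed into the $\liminf$), or the choice of $n_k$ in the second.
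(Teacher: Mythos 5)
Your proposal is correct, and your two routes are worth comparing against the paper's. The paper computes the \emph{classical} Hausdorff measure of the set $A$: it covers $A$ by the $2^{mp}$ cones $[\sigma]$ with $\sigma\in T_Z$, $|\sigma|=mq$, observes that $\mathcal H^{p/q}_{2^{-mq}}(A)\le 1$ for every $m$, and concludes $\Dim_H(A)\le p/q$; the passage from the classical dimension of $A$ to the effective dimension $\Dim^1_H(x)$ of each $x\in A$ is left implicit and relies on the cover being uniformly computable. Your second, test-theoretic route is essentially that same covering argument but with the effectivity made explicit: you read off directly that the covers form a Martin-L\"of $\gamma$-test for every rational $\gamma>p/q$, so $x$ is not $\gamma$-random, hence $\Dim^1_H(x)\le p/q$. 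That is arguably a cleaner formulation of the same idea, since it proves precisely what the lemma asserts rather than proving a statement about classical dimension and appealing to an unstated transfer. Your first route, via the characterization $\Dim^1_H(x)=\liminf_n K(x\upto n)/n$, is genuinely different: instead of covering $A$, you directly compress an arbitrary $x\in A$ by recording only the $f(n)\le\frac{p}{q}n+p$ free bits, giving $K(x\upto n)\le\frac{p}{q}n+O(\log n)$. This is more elementary in the sense that it requires no measure-theoretic machinery, though it does invoke the Kolmogorov-complexity characterization of effective dimension, which the paper does not use. Both of your arguments are correct; the bookkeeping ($f(n)\le\frac{p}{q}n+p$, the $O(\log n)$ cost of encoding $n$, the choice of $n_k$) is handled appropriately.
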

\begin{proof}
	Let $\mathcal H^{p/q}_{\eps}$ denote the usual $\eps$-approximation to $p/q$-dimensional Hausdorff measure $\mathcal H^{p/q}$. Note that we can cover $A$ with $2^{mp}$ many cones $[\sigma]$ with $|\sigma|=mq$, and hence if $\eps=2^{-mq}$ then $\mathcal H^{p/q}_\eps(A)\le 2^{mp}(2^{-mq})^{p/q}=1$. As $m\to\infty$, $\eps\to 0$ and so $\mathcal H^{p/q}(A)\le 1$ and thus $\Dim_H(A)\le p/q$.
\end{proof}

\begin{thm}
	For each $\eps>0$, each $\P^*_{\gamma}$-ML-random closed set for $2^{-\gamma}=2/3$ contains a real $x$ with $\Dim^1_H(x)\le \log_2(\frac{3}{2})+\eps$.
\end{thm}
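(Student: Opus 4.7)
The plan is to combine the thin closed set of Lemma \ref{stronger} with a Solovay-style test on the Florida tree. Pick a rational $a/b$ with $\gamma < a/b \le \gamma + \eps$, and apply Lemma \ref{stronger} (with $\eps' = a/b - \gamma$) to obtain $A = [T_Z]$ where $Z = \{n : n \mod b < a\}$. By Lemma \ref{zee} every real in $A$ has effective Hausdorff dimension $\le a/b \le \gamma + \eps$. Let $\Gamma = \Gamma_S$ be a $\P^*_\gamma$-ML-random closed set and $G$ the associated Florida tree; for $\sigma \in G$ write $\Gamma_\sigma = \{y : \sigma y \in \Gamma\}$. It then suffices to produce some $\sigma \in G$ with $\Gamma_\sigma \inter A \neq \nil$, for then any $y$ in the intersection gives $\sigma y \in \Gamma$ with $\Dim^1_H(\sigma y) = \Dim^1_H(y) \le a/b$, using invariance of effective Hausdorff dimension under finite prefix changes.

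For each $n$, define
\[
U_n = \{S : \Gamma_\sigma \inter A = \nil \text{ for every } \sigma \in G_n\}.
\]
This is uniformly $\Sigma^0_1$: $G_n$ is decidable in $S$, and by K\"onig's lemma $\Gamma_\sigma \inter A = \nil$ is equivalent to the computable (in $S$) binary tree $\{\tau : \sigma\tau \in G \text{ and } \tau \in T_Z\}$ having an empty level, which is a $\Sigma^0_1$ event. The aim is to show $\sum_n \lambda^*_\gamma(U_n) < \infty$, making $(U_n)_n$ a Solovay test. Any $\lambda^*_\gamma$-ML-random $S$ then lies in only finitely many $U_n$, and any $n$ with $S \notin U_n$ produces the required $\sigma$.

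To bound $\lambda^*_\gamma(U_n)$: conditional on $G_n$, the subtrees $\{\tau : \sigma\tau \in G\}$ for $\sigma \in G_n$ are independent copies of a fresh Florida tree (by the independence in the definition of $\lambda^*_\gamma$), so $\lambda^*_\gamma(U_n \mid G_n) = (1-s)^{|G_n|}$ where $s := \lambda^*_\gamma(\Gamma \inter A \neq \nil)$. First, $s > 0$: as a time-inhomogeneous branching process, $G \inter T_Z$ has mean offspring $p = \frac{2}{3}$ at constrained levels ($Z(n)=0$) and $2p = \frac{4}{3}$ at free levels ($Z(n)=1$), so its mean over one period of length $b$ is
\[
p^{b-a}(2p)^a = 2^a p^b = 2^{a - b\gamma} = 2^{b(a/b - \gamma)} > 1,
\]
hence supercritical, so non-extinct with positive probability. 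Unconditionally $\lambda^*_\gamma(U_n) = \phi_n(1-s)$, where $\phi_n$ is the $n$-th iterate of the Florida offspring PGF $f(x) = \frac{2}{3}x + \frac{1}{3}x^2$. Since $f'(0) = \frac{2}{3} < 1$, the iteration $x \mapsto f(x)$ eventually contracts towards $0$ with ratio $\to \frac{2}{3}$, so $\sum_n \phi_n(1-s) < \infty$ by the ratio test.

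The main obstacle is precisely this summability. One might be concerned that $|G_n|$ can be as small as $1$ (a linear path, with probability $(2/3)^n$), which already forces the contribution $(1-s)(2/3)^n$ to $\lambda^*_\gamma(U_n)$; but this contribution itself is summable, and the PGF estimate shows the total stays of the same geometric order. The argument succeeds precisely because the Florida parameter $p = 2/3$ satisfies $p > 1/2$, equivalently $f'(0) = 2(1-p) < 1$.
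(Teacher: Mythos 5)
Your proof is correct and takes a genuinely different route from the paper's. The paper decomposes the tree along its leftmost path $\ell$, setting $\Gamma = \bigcup_i \ell_i\Gamma_i$ where the $\Gamma_i$ are (conditionally) independent fresh Florida trees, and defines $U_n$ by $\Gamma_n \in U$ where $U = \{\Gamma : (\exists n)(\forall\sigma\in G_n)\,[\sigma]\cap A = \nil\}$; since each $U_n$ has probability $u<1$ independently, $\inter_n U_n$ is effectively null. You instead work directly with the level-$n$ cross-section $G_n$, obtain $\lambda^*_\gamma(U_n)=\phi_n(1-s)$ via the offspring probability generating function $f(x)=\frac{2}{3}x+\frac{1}{3}x^2$, and use $f'(0)=\frac{2}{3}<1$ for geometric summability and hence a Solovay test. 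Your approach buys a cleaner computability story: the leftmost-path decomposition requires the map $\Gamma\mapsto\Gamma_n$ to be (partially) computable and the $U_n$'s to be uniformly $\Sigma^0_1$ along it, whereas your $U_n$ is manifestly $\Sigma^0_1$ as a finite conjunction over the decidable set $G_n$. You also establish $s>0$ by the classical supercriticality computation for the periodic branching process $G\cap T_Z$ (mean offspring $2^{a-b\gamma}>1$ per period), which is more self-contained than the paper's appeal to Hawkes or its alternative route through Theorem \ref{dsf} and the existence of high-dimension reals in $A$; note that both arguments ultimately need an effective constant ($u<1$ for the paper, $s>0$ for you) to turn the measure-zero statement into a Martin-L\"of/Solovay null set, and your supercriticality bound supplies it explicitly. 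One small point worth making explicit: since $|G_n|$ is finite and $G_n$ is decidable from an initial segment of $S$, the conjunction over $\sigma\in G_n$ is indeed uniformly $\Sigma^0_1$ in $n$ as you need.
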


\begin{proof}
	Fix $\eps>0$. We may assume $\gamma+\eps\in\mathbb Q$. Let $A$ be as in Lemma \ref{stronger}. It follows from \cite{KNe}*{Theorem 4.10} that $\dim(A)>\gamma$. Let
	\[
	U:=\{\Gamma: (\exists n) (\forall \sigma\in G_{n})\, [\sigma]\inter A=\nil\},
	\]
	which is a $\Sigma^0_1$ class. Indeed, there are only finitely many $\sigma\in 2^{n}$ to check for a given $n$, and for our choice of set $A=[T_Z]$, $\{\sigma\in\Omega: [\sigma]\inter A=\nil\}$ is computable.

	As shown by Hawkes \cite{Hawkes}, $\P^{*}_{\gamma}(U)<1$. In fact, one way to see this is to observe that if $\P^{*}_{\gamma}(U)=1$ then $U$ would contain all $\P^{*}_{\gamma}$-ML-random closed sets $\Gamma$ (even all $\P^{*}_{\gamma}$-\emph{Kurtz random} closed sets), contradicting Theorem \ref{dsf} and the following fact (see \cite{JanPhD}):
	\begin{quote}
		each set $B$ with $\dim(B)>\gamma$ contains a real $x$ with $\Dim^1_H(x)>\gamma$.
	\end{quote}
	Let $\Gamma$ be a $\P^{*}_{\gamma}$-ML-random closed set, let $\ell$ be its leftmost path, and let $n_i$ be the $i$th zero of $\ell$ (so $\ell(n_i)=0$) and $\ell_i=(\ell\restrict n_i)^{\frown}1$. Using the notation $\sigma X=\{\sigma^{\frown} x:x\in X\}$, we have
	\[
	\Gamma=\bigcup_{i\in\omega}\ell_i\Gamma_i
	\]
	where $\Gamma_i$ is again a $\P^{*}_{\gamma}$-ML-random closed set.

	Let $U_n$ be defined by $\Gamma\in U_n$ iff $\Gamma_n\in U$. Then the events $U_n$ are mutually independent and there is a constant $u$ such that for each $n$, $\P^{*}_{\gamma}(U_{n})=u<1$. Hence $\P^{*}_{\gamma}(\cap_{n}U_{n})=\lim_{n\to\infty} u^n=0$. Because $u^n\to 0$ effectively, $\inter_n U_n$ is in fact a $\P^{*}_{\gamma}$-Martin-L\"of null set. Thus if $\Gamma$ is $\P^{*}_{\gamma}$-ML-random, then there is an $i$ for which $A\inter\Gamma_i\ne\nil$, or equivalently $(\ell_iA)\inter\Gamma\ne\nil$. Thus $\Gamma$ contains a shift of a member of $A$.

	By Lemma \ref{zee}, for each $x\in A$, $\Dim^1_H(x)\le\gamma+\eps$. Thus each $\P^{*}_{\gamma}$-ML-random closed set $\Gamma$ contains a shift $y$ of a real $x$ with $\Dim^1_H(x)\le\gamma+\eps$ and hence in fact contains $y$ with $\Dim^1_H(y)\le\gamma+\eps$.
\end{proof}

\section{Applications}

\subsection{Approximation properties}

\begin{pro}
	Let $0<\gamma<1$. If  $x$ is a real such that the function $n\mapsto x(n)$ is $f$-computably enumerable for some computable function $f$ for which ${\sum_{j<n} f(i)}{2^{-n\gamma}}$ goes effectively to zero, then $x$ is not $\gamma$-random.
\end{pro}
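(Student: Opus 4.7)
The plan is to exhibit, directly from the approximation hypothesis, a Martin-L\"of $\gamma$-test in the sense of Section \ref{sec3} whose intersection contains $x$. First I would unpack the hypothesis by interpreting ``$n\mapsto x(n)$ is $f$-c.e.'' as the existence of a computable approximation $g:\omega\times\omega\to\{0,1\}$ with $\lim_s g(n,s)=x(n)$ and at most $f(n)$ mind-changes at coordinate $n$. Writing $y_s(n):=g(n,s)$, the key combinatorial observation is that each mind-change at a position $i<n$ introduces at most one new initial segment of length $n$ into the tree of approximants, so
\[
	E_n:=\{y_s\upto n : s\in\omega\}
\]
is a c.e.\ subset of $\Omega$, uniformly in $n$, of cardinality at most $F(n):=1+\sum_{i<n} f(i)$. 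Moreover $x\upto n\in E_n$ for every $n$, since $x(i)=\lim_s y_s(i)$ and the approximation stabilizes on each finite initial segment.

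Next, I would use the effective convergence hypothesis $F(n)\cdot 2^{-n\gamma}\to 0$ to compute, uniformly in $k$, a length $n_k$ such that $F(n_k)\cdot 2^{-n_k\gamma}\le 2^{-k}$. Setting $U_k:=E_{n_k}$ yields a uniformly $\Sigma^0_1$ sequence of subsets of $\Omega$. Since every element of $U_k$ has length exactly $n_k$,
\[
	\wt_\gamma(U_k)=|U_k|\cdot 2^{-n_k\gamma}\le F(n_k)\cdot 2^{-n_k\gamma}\le 2^{-k},
\]
so $(U_k)_{k<\omega}$ is a Martin-L\"of $\gamma$-test. By construction $x\upto n_k\in U_k$, hence $x\in[U_k]^\preceq$ for every $k$, so $x\in\bigcap_k [U_k]^\preceq$ and $x$ is not $\gamma$-random.

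The only delicate point is the counting bound $|E_n|\le F(n)$, which is just the observation that on the tree of approximations the number of nodes at level $n$ is bounded by one plus the total number of bit-flips occurring at positions $i<n$; everything else is a routine computation that matches the $\gamma$-weight decay of the covers against the growth of $F$. If ``$f$-c.e.'' were instead read as ``at most $f(n)$ candidates for $x(n)$ are ever enumerated,'' the same argument goes through with $F(n)=\prod_{i<n}f(i)$ replaced by $\sum_{i<n}f(i)$ once one notes that the candidate-$n$-length initial segments form a c.e.\ set of size at most $\sum_{i<n} f(i)$, so the choice of convention does not affect the proof.
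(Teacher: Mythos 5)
Your proof is correct and follows essentially the same route as the paper's: enumerate the approximating length-$n$ initial segments, bound their number by (roughly) $\sum_{i<n}f(i)$, note the resulting $\gamma$-weight is $F(n)2^{-n\gamma}$, and extract a computable subsequence of lengths where this drops below $2^{-k}$ to form a Martin-L\"of $\gamma$-test containing $x$. Your handling of the initial approximant (the ``$+1$'' in $F(n)$) is actually a touch more careful than the paper's, which implicitly needs to record the approximation just after each mind change to guarantee $x\restrict n$ appears, but this is a harmless detail on both sides since $2^{-n\gamma}\to 0$ effectively.
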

\begin{proof}
	Suppose $n\mapsto x(n)$ is $f$-c.e. for some such $f$, and let $F(n)=\sum_{j<n} f(n)$. Let $\alpha$ be any computable function such that $\alpha(n,i)\ne\alpha(n,i+1)$ for at most $f(n)$ many $i$ for each $n$, and $\lim_{i\to\infty}\alpha(i,n)=x(n)$. Let $c(n,j)$ be the $j$th such $i$ that is discovered for any $k<n$; so $c$ is a partial recursive function whose domain is contained in $\{(n,j):j\le F(n)\}$. For a fixed $i$, $\alpha$ defines a real $\alpha_i$ by $\alpha_i(n)=\alpha(i,n)$. Let $V_n=\{x: \exists j\le F(n)\,\, x\restrict n = \alpha_{c(n,j)}\restrict n)\}.$ Since $V_n$ is the union of at most $F(n)$ many cones $[x\restrict n]$,
	\[
	\wt_\gamma(V_n)\le\sum_{j=1}^{F(n)} 2^{-n\gamma} = {F(n)}{2^{-n\gamma}},
	\]
	which goes effectively to zero by assumption. Thus there is a computable sequence $\{n_k\}_{k\in\N}$ such that $\wt_\gamma(V_{n_k})\le 2^{-k}$. Let $U_k=V_{n_k}$. Then $U_k$ is $\Sigma^0_1$ uniformly in $k$, and $x\in\cap_k U_k$. Hence $x$ is not $\gamma$-random.
\end{proof}

\begin{cor}
	[\cite{BBCDW}]
	No member of a ML-random closed set under the Florida distribution is $f$-c.e. for any polynomial-bounded $f$.
\end{cor}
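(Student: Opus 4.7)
The plan is to chain together results already established in the paper; no new idea is needed beyond careful bookkeeping. Suppose $x$ lies in a closed set $\Gamma$ that is ML-random under the Florida distribution. By Lemma \ref{Logan}, $\Gamma$ is $\P^{*}_{\gamma}$-ML-random for $\gamma = \log_{2}\frac{3}{2}$, i.e.\ $\Gamma = \Gamma_{H}$ for some $\lambda^{*}_{\gamma}$-ML-random Florida tree $H$. Invoking Theorem \ref{random}, produce a $\lambda_{1,\gamma}$-ML-random GW-tree $G$ with $G_{\infty} = H_{\infty}$. Since $\Gamma_{G} = \Gamma_{G_{\infty}} = \Gamma_{H_{\infty}} = \Gamma_{H} = \Gamma$, the real $x$ lies in a $\P_{1,\gamma}$-ML-random closed set, and is therefore a $\Member_{\gamma}$ in the paper's terminology.

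Now apply Theorem \ref{pacific} (equivalently, the middle implication of Theorem \ref{dsf}) to conclude that $x$ is strongly $\gamma$-random, and in particular $\gamma$-random.

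Finally, suppose toward a contradiction that $x$ is $f$-c.e.\ for some polynomial-bounded computable $f$. Then $F(n) := \sum_{j<n} f(j)$ is polynomial in $n$, so the sequence $F(n)\, 2^{-n\gamma}$ is a polynomial times a genuine exponential decay; since $\gamma = \log_{2}\frac{3}{2} > 0$, this sequence tends effectively to zero. The preceding proposition then yields that $x$ is not $\gamma$-random, contradicting the previous paragraph.

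The only nontrivial point in the whole argument is the last numerical observation: one must notice that the Florida parameter $p = 2/3$ produces a strictly positive $\gamma$, which is exactly what the effective convergence of $F(n)\, 2^{-n\gamma}$ requires. Everything else is a direct substitution into previously proved theorems.
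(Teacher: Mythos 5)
Your proof is correct and follows essentially the same route as the paper: reduce membership in a Florida-random closed set to being a $\Member_{\gamma}$ via the Section 2 equivalence, then combine Theorem \ref{pacific} (membership implies $\gamma$-randomness) with the preceding proposition (polynomially-bounded $f$-c.e.\ reals fail $\gamma$-randomness). The paper states this more tersely as a chain of contrapositives and leaves the Florida-to-GW translation implicit, whereas you unfold it through Lemma \ref{Logan} and Theorem \ref{random} and argue by contradiction, but the substance is identical.
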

\begin{proof}
	If $f$ is polynomially bounded then clearly ${\sum_{j<n} f(i)}{2^{-n\gamma}}$ goes effectively to zero. Therefore if $x$ is $f$-c.e., $x$ is not $\gamma$-random, hence not a $\Member_\gamma$ for any $0<\gamma<1$, and thus not a member of a ML-random closed set under the Florida distribution.
\end{proof}

\subsection{Randomness for Bernoulli measures}

Our results characterize the Bernoulli measures for which random sequences are \textsc{Members}. Suppose $0\le p\le 1$. The Bernoulli measure $\mu_{p}$ on $2^{\omega}$ is uniquely defined by the properties
\begin{itemize}
	\item[(i)] $\mu_{p}\{A: A(n)=1\}=p$, and
	\item[(ii)] the events $\{A:A(n)=1\}$ are mutually independent for distinct $n\in\omega$.
\end{itemize}
An infinite binary sequence $A\in\Omega$ is ML-random for the Bernoulli measure $\mu_{p}$, or for short $\mu_{p}$-random, if for each uniformly $\Sigma^{0}_{1}(p)$ sequence of open sets $U_{n}$, $n\in\omega$, with $\mu_{p}(U_{n})\le 2^{-n}$, we have $A\not\in\cap_{n}U_{n}$. This notion was related to (the martingale characterization of) effective Hausdorff dimension by Lutz.

\begin{thm}
	[Lutz \cite{Lutz}]
	\label{Lut}
	For each $\mu_p$-random sequence $A$, the effective Hausdorff dimension of $A$ is
	\[
	H(\mu_p):=-(p\log p+\overline p\log\overline p).
	\]
\end{thm}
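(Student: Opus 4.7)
The plan is to combine the effective strong law of large numbers for $\mu_p$-random sequences with the Levin--Schnorr characterization of randomness in terms of prefix-free Kolmogorov complexity, adapted to the Bernoulli measure $\mu_p$ (with all notions relativized to $p$ if $p$ is noncomputable).

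First I would establish that every $\mu_p$-random $A$ satisfies $k_n = pn + o(n)$, where $k_n$ denotes the number of ones among the first $n$ bits of $A$. Chernoff bounds give $\mu_p\{A : |k_n - pn| \ge \eps n\}$ exponentially small in $n$, and these bounds assemble into a uniformly $\Sigma^0_1(p)$ Martin-L\"of test ruling out $|k_n - pn| \ge \eps_m n$ on any $\mu_p$-random $A$, for some computable sequence $\eps_m \downarrow 0$.

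Second, I would apply the Levin--Schnorr theorem for $\mu_p$, which states that $A$ is $\mu_p$-random iff
\[
K(A\upto n) \ge -\log \mu_p([A\upto n]) - O(1).
\]
Since $\mu_p([A\upto n]) = p^{k_n}\overline{p}^{\,n-k_n}$, step one yields
\[
-\log \mu_p([A\upto n]) = -k_n \log p - (n-k_n)\log \overline{p} = nH(\mu_p) + o(n),
\]
so $K(A\upto n) \ge nH(\mu_p) - o(n)$, hence $\Dim^1_H(A) \ge H(\mu_p)$. For the matching upper bound I would encode $A\upto n$ by specifying the integer $k_n$ (using $O(\log n)$ bits) together with the lexicographic rank of $A\upto n$ among the $\binom{n}{k_n}$ binary strings of length $n$ with exactly $k_n$ ones; Stirling's formula gives $\log \binom{n}{k_n} = nH(k_n/n) + O(\log n) = nH(\mu_p) + o(n)$, yielding $K(A\upto n) \le nH(\mu_p) + o(n)$ and hence $\Dim^1_H(A) \le H(\mu_p)$.

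The main obstacle is calibrating the two $o(n)$ error terms so that they genuinely match: the effective LLN must be exhibited with a quantitative, computable convergence rate strong enough to absorb both the $O(\log n)$ overhead from coding $k_n$ and the error $|k_n - pn|\cdot|\log p - \log \overline{p}|$ appearing in step two. Because the Chernoff deviations are exponentially small, any sublogarithmic decay rate for $\eps_m$ suffices, so no delicate optimization is needed; the rest is bookkeeping. Relativization to $p$ is uniform throughout, which handles the case in which $p$ is noncomputable.
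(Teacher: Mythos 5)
The paper does not prove this theorem; it is cited to Lutz, whose original argument in \cite{Lutz} works with $s$-gales rather than Kolmogorov complexity. Your approach via the Levin--Schnorr inequality and an enumerative encoding is a correct and standard alternative, relying on the Kolmogorov-complexity characterization $\Dim^1_H(A)=\liminf_n K(A\upto n)/n$ (which is equivalent to the Reimann--Stephan supremum characterization the paper uses, so no mismatch). Both halves are sound: the effective law of large numbers gives $k_n=pn+o(n)$ for $\mu_p$-random $A$; Levin--Schnorr for $\mu_p$ then gives $K(A\upto n)\ge nH(\mu_p)-o(n)$; and coding $A\upto n$ by $k_n$ plus its rank among the $\binom{n}{k_n}$ strings with $k_n$ ones gives, via Stirling, $K(A\upto n)\le nH(\mu_p)+o(n)$. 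What Lutz's gale argument buys is that it lives entirely inside the martingale framework in which he originally defined constructive dimension; what your route buys is a more transparent two-sided complexity estimate.

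One point is handled only implicitly and would benefit from being spelled out. For noncomputable $p$, Levin--Schnorr is a statement about the $p$-relativized complexity $K^p$, giving $K^p(A\upto n)\ge -\log\mu_p([A\upto n])-O(1)$. To convert this into a bound on the unrelativized $K$ you need the inequality $K(\sigma)\ge K^p(\sigma)-O(1)$, which holds because the universal $p$-oracle prefix machine can simulate the oracle-free one. By contrast, your upper-bound encoding of $(k_n,\mathrm{rank})$ never mentions $p$, so it directly bounds unrelativized $K$. This asymmetry is precisely why the stated theorem remains true for noncomputable parameters $p$, and making it explicit would close the one place where the argument leans on ``relativization is uniform throughout'' without saying which direction needs which fact.
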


To find the values of $p$ for which a Bernoulli $\mu_p$-random sequence is a $\Member_{\gamma}$, note that
\[
	H(\mu_p)> \gamma\quad\Longleftrightarrow\quad p^{p} (\overline p)^{\overline p}< 2^{-\gamma}.
\]
For the value $2^{-\gamma}=\frac{2}{3}$ studied by the Florida group, a numerical calculation on the web site Wolfram Alpha yields that this inequality is equivalent to
\[
	0.140276506997464...< p< 0.859723493002535...
\]

\section{A different approach} 

\begin{df}
	A set $A\subseteq\N$ is \emph{infinitely often r.e.\ traceable} if there is a recursive function $p(n)$ such that for all $f:\N\to\N$, if $f$ is recursive in $A$ then there is a uniformly r.e.\ sequence of finite sets $E_n$, $n\in\N$, such that $E_n$ has cardinality $\le p(n)$ for each $n$; and such that for infinitely many $n$, we have $f(n)\in E_n$.

	A total function $f$ is DNR (diagonally nonrecursive) if $\neg\exists n$, $f(n)=\varphi_n(n)$, where $\varphi_n$ is the $n$th partial recursive function. (Note $f$ is total, whereas $\varphi_n$ need not be.)

	A real $A$ is \emph{Kurtz random relative to an oracle $B$} if it does not belong to any $\Pi^0_1(B)$ subset of $2^\omega$ of fair-coin measure zero.
\end{df}

\begin{thm}[see \cite{KMeS}]\label{bjoern}
	$A$ is infinitely often r.e.\ traceable iff $A$ does not compute any DNR function.
\end{thm}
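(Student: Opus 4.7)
The plan is to establish both directions of this equivalence. I expect the backward direction to be reasonably direct via the contrapositive, while the forward direction is the main obstacle and requires recursion-theoretic machinery.

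For the backward direction ($\Leftarrow$), suppose $A$ does not compute any DNR function, and let $f\le_T A$ be arbitrary. I would test $f$ against the uniformly r.e.\ sequence
\[
	E_n=\{\varphi_k(k):k\le n,\ \varphi_k(k)\downarrow\},
\]
which satisfies $|E_n|\le n+1$. If $f(n)\in E_n$ only finitely often, then for almost every $n$ one has $f(n)\ne\varphi_k(k)$ for every $k\le n$; in particular $f(n)\ne\varphi_n(n)$ for all but finitely many $n$. Since only finitely many values need adjustment, one can define $f'\le_T f\le_T A$ which is totally DNR by overwriting $f$ on a finite initial segment with values chosen to avoid the corresponding $\varphi_n(n)$ when the latter converges. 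This contradicts the hypothesis, so $\{E_n\}$ traces $f$ infinitely often with computable bound $p(n)=n+1$, witnessing i.o.\ r.e.\ traceability.

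For the forward direction ($\Rightarrow$), suppose $A$ is i.o.\ r.e.\ traceable with bound $p$, and assume for contradiction that $f\le_T A$ is DNR. Let $\{E_n\}$ be a $p$-bounded r.e.\ trace with $f(n)\in E_n$ for infinitely many $n$. The strategy is to convert the bounded trace into a DNR-violating coincidence via the $s$-$m$-$n$ and parametric recursion theorems. Using $s$-$m$-$n$, obtain, for each index $i<p(n)$, a computable selector $r_i:\omega\to\omega$ such that by Kleene's fixed-point theorem $\varphi_{r_i(n)}(r_i(n))$ equals the $i$-th enumerated element of $E_{r_i(n)}$. The DNR assumption then forces $f(r_i(n))$ to miss this value for all $n$ and all $i$. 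Applying the tracing hypothesis a second time to the $A$-computable composite function $\tilde f(n)=\langle f(r_0(n)),\dots,f(r_{p(n)-1}(n))\rangle$ produces a bounded r.e.\ trace catching $\tilde f$ infinitely often, and I would then argue via a combinatorial counting step that the DNR-forced avoidances at each coordinate are inconsistent with $\tilde f$ being captured by any trace of bounded size.

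The main obstacle is aligning the DNR avoidances across all $p(n)$ positions of the trace $E_m$ at a common index $m$: a single application of the recursion theorem produces only one coincidence of $\varphi_m(m)$ with a single element of $E_m$, and DNR then forbids only one value of $f(m)$ rather than all of $E_m$. Overcoming this requires either iterating the construction through $A$-computable composite functions, as sketched above, or a more delicate simultaneous diagonalization in the style of the Kjos-Hanssen--Merkle--Stephan argument. Routine verifications such as the computability of the selectors and the uniformity of the traces I would omit.
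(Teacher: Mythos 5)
The paper does not prove Theorem~\ref{bjoern}; it only cites \cite{KMeS}, so there is no internal proof to compare against. Judged on its own terms, your proposal splits cleanly: the backward direction is correct, but the forward direction has a genuine gap.

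Your backward direction is right (and can even be simplified). If every $f\le_T A$ avoided $\{\varphi_n(n)\}$ cofinitely, then patching $f$ on the finitely many remaining inputs with hard-coded values would give an $A$-computable DNR function; so every $f\le_T A$ satisfies $f(n)=\varphi_n(n)$ infinitely often, and $E_n=\{\varphi_n(n)\}$ with $p(n)=1$ already witnesses i.o.\ r.e.\ traceability. Your version with $E_n=\{\varphi_k(k):k\le n\}$ and $p(n)=n+1$ is fine but weaker than necessary.

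The forward direction does not close. You correctly identify the obstacle: a single recursion-theorem fixed point gives $\varphi_{r_i(n)}(r_i(n))=E_{r_i(n)}[i]$, so DNR-ness of $f$ forbids only the single value $f(r_i(n))=E_{r_i(n)}[i]$, not membership of $f(r_i(n))$ in the whole set $E_{r_i(n)}$. But the proposed repair --- trace the tuple function $\tilde f(n)=\langle f(r_0(n)),\dots,f(r_{p(n)-1}(n))\rangle$ and then ``count'' --- does not produce a contradiction. The new trace $\tilde E_n$ catching $\tilde f$ infinitely often consists of $\le p(n)$ tuple-codes, and nothing links the coordinates of those tuples to the sets $E_{r_i(n)}$ that the recursion theorem was aimed at; the forced inequalities $f(r_i(n))\ne E_{r_i(n)}[i]$ place no constraint whatsoever on $\tilde E_n$. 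So the ``combinatorial counting step'' you defer to is exactly the missing idea, and I do not see how to supply it along these lines. In \cite{KMeS} this direction is handled differently, essentially by first proving that a DNR degree computes a function $g$ whose conditional Kolmogorov complexity $C(g(n)\mid n)$ is bounded below by an arbitrary computable order, and then observing that any such $g$ escapes every $p$-bounded r.e.\ trace almost everywhere (since being the $i$-th element of a set of size $\le p(n)$ caps the complexity by $\log p(n)+O(1)$). You would need to establish something of that strength --- either the complexity lemma or a direct escape-from-bounded-r.e.-sets construction from DNR --- before the forward implication goes through.
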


\begin{thm}\label{1}\label{bjornie}
 	If $x$ is not of DNR degree then every Martin-L\"of random real is Kurtz-random relative to $x$.
\end{thm}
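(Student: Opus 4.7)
The plan is to argue by contradiction: suppose $x$ is not of DNR degree and $y$ is ML-random, but $y$ fails Kurtz randomness relative to $x$, and then turn a witnessing $\Pi^0_1(x)$ null class into an \emph{unrelativized} Martin-L\"of test capturing $y$. The bridge between the two is Theorem \ref{bjoern}: from the hypothesis on $x$ we get a recursive bound $p(\cdot)$ with the property that every $x$-recursive function has a uniformly r.e.\ trace of size $p$ that is correct infinitely often.

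First, I would fix a $\Pi^0_1(x)$ class $P$ of fair-coin measure $0$ with $y\in P$, and present it as $P=[T_x]$ for some $x$-computable tree $T_x$. Since $\mu([T_x])=0$ and the slice sizes $|T_x\cap 2^m|$ are $x$-computable, I can define an $x$-recursive function $\ell(k)$ returning, for each $k$, a depth satisfying $|T_x\cap 2^{\ell(k)}|\le 2^{\ell(k)-k}/p(k)$; write $S_k:=T_x\cap 2^{\ell(k)}$ for the resulting slice, which spans a clopen set of measure at most $2^{-k}/p(k)$. Encode these data by the $x$-recursive function $g(k)=\la \ell(k),S_k\ra$, and apply the i.o.\ r.e.\ traceability provided by Theorem \ref{bjoern} to obtain uniformly r.e.\ finite sets $E_k$ of size $\le p(k)$ with $g(k)\in E_k$ for infinitely many $k$.

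The test itself is then assembled without $x$: let $V_k$ be the union of $[S]^\preceq$ taken over those codes $\la m,S\ra\in E_k$ that pass the recursive sanity check ``$S\subseteq 2^m$ and $|S|\le 2^{m-k}/p(k)$''. Then $(V_k)$ is uniformly $\Sigma^0_1$, and the validity condition forces $\mu(V_k)\le p(k)\cdot 2^{-k}/p(k)=2^{-k}$. For every $k$ with $g(k)\in E_k$, the honest code $\la\ell(k),S_k\ra$ is one of the validated entries, and since $y\in [T_x]$ we have $y\restrict \ell(k)\in S_k$, so $y\in V_k$. Hence $y$ lies in infinitely many $V_k$; the tails $W_k:=\bigcup_{j\ge k}V_j$ form a genuine Martin-L\"of test (measure $\le 2^{1-k}$) with $y\in\bigcap_k W_k$, contradicting the ML-randomness of $y$. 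The main obstacle, and the only delicate step, is the balancing in the definition of $\ell(k)$: the slice $S_k$ must be made thin enough that the measure saved absorbs the inflation by the trace size $p(k)$, and the validity check on trace entries must be recursive (not merely $x$-recursive) so that the resulting test is unrelativized; after that, the measure accounting and the van-Lambalgen--style tail trick are routine.
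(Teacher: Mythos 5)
Your argument is correct and follows essentially the same route as the paper's own proof: present the witnessing $\Pi^0_1(x)$ null class as $[T_x]$, use the vanishing measure to pick $x$-recursively a depth where the slice is thin by a factor $p(k)$, trace the resulting $x$-recursive function via Theorem \ref{bjoern}, filter the trace entries by a recursive size/measure sanity check, and take tail unions to form an unrelativized ML-test. The only cosmetic differences are that the paper builds the test directly for every ML-random real rather than arguing by contradiction from a fixed $y$, and indexes the tails so the bound comes out exactly $2^{-n}$ rather than $2^{1-k}$.
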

\begin{proof}
	Let $T$ be an $x$-recursive tree such that $[T]$ has measure zero. It suffices to show that $[T]$ is contained in $\bigcap_n U_n$, where the sets $U_n$ are (uniformly) $\Sigma^0_1$ classes of measure $\le 2^{-n}$.

	By Theorem \ref{bjoern}, since $x$ is not of DNR degree, $x$ is infinitely often r.e.\ traceable. Fix a recursive trace size bound function $p$ as in the definition of i.o.\ r.e.\ traceability. Let $V_k$ be the minimal clopen set that we can tell is a covering of $T$ by looking at the first level of $T$ where it becomes evident that the measure of $[T]$ is $<2^{-k}/p(k)$. So $[T]\subseteq V_k$.  Let $g(k)$ be (the code for) $V_k$. Since $g$ is recursive in $x$, there are infinitely many $k$ for which the value $g(k)$ is in an r.e.\ trace $S_k$ of size bounded by $p(k)$ consisting of only clopen sets $W_k$ of measure $<2^{-k}/p(k)$. In particular there is some such $k>n$, so $[T]\subseteq U_n$. Let $U_n=\bigcup_{k>n} S_k$. Then $\{U_n\}_{n\in\N}$ is clearly uniformly $\Sigma^0_1$. Moreover $\mu U_n\le \sum_{k>n} p(k) 2^{-k}/p(k)=2^{-n}$.
\end{proof}

Greenberg and Miller \cite{GM2} have shown that the converse of Theorem \ref{bjornie} also holds. Theorem \ref{bjornie} was a starting point for research toward the present paper. The idea is that if $x$ is not of DNR degree then $x$ cannot belong to a Martin-L\"of random closed set $\Gamma$, because $\Gamma$ is Kurtz random relative to $x$ and the set of paths of $\Gamma$ has measure zero. Now, as is well known a real of positive effective Hausdorff dimension computes a DNR function, and it turned out that adaptation of the work of Hawkes \cite{Hawkes} and Lyons \cite{Lyons} gave precise results in terms of effective Hausdorff dimension, as shown above. This approach was thus more powerful then the approach using Theorem \ref{bjornie}.

\newpage
\subsection*{An alternative proof of Theorem \ref{bjornie}} \footnote{We are grateful to the referee for this proof.} Assume that A is Martin-L\"of random and A is not Kurtz random relative to $X$. Since $A$ is not Kurtz random relative to $X$, there is a martingale $M\le_T X$ and a function $f\le_T X$ such that $M(A\restrict f(n)) > 4^n$ for all $n$. Now one can define a function $g\le_T X$ which outputs a string having the prefix $1^n0$ and the suffix consisting of all strings $\tau$ of length $f(n)$ for which $M(\tau) > 4^n$. There are at most $2^{f(n)}/4^n=2^{f(n)-2n}$ many of them. Since $A$ is Martin-L\"of random, there is a constant $d$ such that $K(A\restrict f(n))\ge f(n)-d$ for almost all $n$. Furthermore, there is a partial recursive function $\psi$ such that for all $n$ there is $m < 2^{f(n)-2n}$ such that 
$A\restrict f(n) = \psi(g(n),m)$. Note that by construction of $g$, the number $n$ can be computed from $g(n)$. Hence it follows that there is a constant $c$ so that 
\[
f(n) \le K(A\restrict f(n)) +d < (f(n)-2n)+ K(g(n))+c+d
\] 
for all $n$. Hence, $K(g(n))\ge n$ for almost all $n$. As $g\le_T X$, it follows from a result of Kjos-Hanssen, Merkle, and Stephan \cite{KMeS} that $X$ has DNR Turing degree.

\section*{Acknowledgments}

This material is based upon work supported by the National Science
Foundation under Grants No.\ 0652669 and 0901020.

\begin{bibdiv}
	\begin{biblist}
		\bib{AP}{article}{
		author={Asarin, E. A.},
   		author={Pokrovski{\u\i}, A. V.},
   		title={Application of Kolmogorov complexity to the analysis of the
   		dynamics of controllable systems},
   		language={Russian, with English summary},
   		journal={Avtomat. i Telemekh.},
   		date={1986},
   		number={1},
   		pages={25--33},
   		issn={0005-2310},
   		review={\MR{831773 (87e:93096)}},
   		note={English translation: Automat. Remote Control 47 (1986), no. 1, part 1, 21--28. },
		}

\bib{Axon}{thesis}
	{
 	author={Axon, Logan},
	title={Random closed sets and probability},
	type={doctoral dissertation},
	organization={University of Notre Dame},
	year={2010}
	}

\bib{BBCDW}{article}{
   author={Barmpalias, George},
   author={Brodhead, Paul},
   author={Cenzer, Douglas},
   author={Dashti, Seyyed},
   author={Weber, Rebecca},
   title={Algorithmic randomness of closed sets},
   journal={J. Logic Comput.},
   volume={17},
   date={2007},
   number={6},
   pages={1041--1062},
   issn={0955-792X},
   review={\MR{2376074 (2008m:68069)}},
   doi={10.1093/logcom/exm033},
}

\bib{Diamondstone.K:09}{article}
	{
	author={Diamondstone, David},
	author={Kjos-Hanssen, Bj\o rn},
	title={Members of random closed sets},
	conference=
		{
      		title={Computability in Europe},
   		},
   	book=
		{
      		series={Lecture Notes in Comput. Sci.},
      		volume={5635},
      		publisher={Springer},
      		place={Berlin},
      		},
	date={2009},
        pages={144--153}
	}

\bib{Falconer}{book}
	{
   	author={Falconer, Kenneth},
   	title={Fractal geometry},
   	note={Mathematical foundations and applications},
   	publisher={John Wiley \& Sons Ltd.},
   	place={Chichester},
   	date={1990},
   	pages={xxii+288},
   	isbn={0-471-92287-0},
   	review={\MR{1102677 (92j:28008)}},
	}
	
\bib{F}{article}{
   author={Fouch{\'e}, Willem},
   title={Arithmetical representations of Brownian motion. I},
   journal={J. Symbolic Logic},
   volume={65},
   date={2000},
   number={1},
   pages={421--442},
   issn={0022-4812},
   review={\MR{1782129 (2002b:68038)}},
   doi={10.2307/2586546},
}

\bib{F2}{article}{
   author={Fouch{\'e}, Willem L.},
   title={Dynamics of a generic Brownian motion: recursive aspects},
   journal={Theoret. Comput. Sci.},
   volume={394},
   date={2008},
   number={3},
   pages={175--186},
   issn={0304-3975},
   review={\MR{2404624 (2009h:68060)}},
   doi={10.1016/j.tcs.2007.12.010},
}

\bib{F1}{article}{
   author={Fouch{\'e}, Willem L.},
   title={The descriptive complexity of Brownian motion},
   journal={Adv. Math.},
   volume={155},
   date={2000},
   number={2},
   pages={317--343},
   issn={0001-8708},
   review={\MR{1794714 (2002e:68044)}},
   doi={10.1006/aima.2000.1945},
}

\bib{Gacs_82}{article}{
   author={G{\'a}cs, P{\'e}ter},
   title={On the relation between descriptional complexity and algorithmic
   probability},
   journal={Theoret. Comput. Sci.},
   volume={22},
   date={1983},
   number={1-2},
   pages={71--93},
   issn={0304-3975},
   review={\MR{693050 (84h:60010)}},
   doi={10.1016/0304-3975(83)90139-1},
}

\bib{GM2}{article}{
   author={Greenberg, Noam},
   author={Miller, Joseph S.},
   title={Lowness for Kurtz randomness},
   journal={J. Symbolic Logic},
   volume={74},
   date={2009},
   number={2},
   pages={665--678},
   issn={0022-4812},
   review={\MR{2518817 (2010b:03050)}},
}

\bib{Hawkes}{article}
	{
	author={Hawkes, John},
   	title={Trees generated by a simple branching process},
	journal={J. London Math. Soc. (2)},
	volume={24},
	date={1981},
	number={2},
	pages={373--384},
	issn={0024-6107},
	review={\MR{631950 (83b:60072)}},
	}

\bib{K:09}{article}{
   author={Kjos-Hanssen, Bj{\o}rn},
   title={Infinite subsets of random sets of integers},
   journal={Math. Res. Lett.},
   volume={16},
   date={2009},
   number={1},
   pages={103--110},
   issn={1073-2780},
   review={\MR{2480564 (2010b:03051)}},
}

\bib{KMeS}{article}{
	author={Bj\o rn Kjos-Hanssen},
	author={Wolfgang Merkle},
	author={Frank Stephan},
	title={Kolmogorov complexity and the recursion theorem},
	journal={Transactions of the American Mathematical Society},
	volume={363},
	year={2011},
	status={in press},
}

\bib{KNe}{article}{
   author={Kjos-Hanssen, Bj{\o}rn},
   author={Nerode, Anil},
   title={Effective dimension of points visited by Brownian motion},
   journal={Theoret. Comput. Sci.},
   volume={410},
   date={2009},
   number={4-5},
   pages={347--354},
   issn={0304-3975},
   review={\MR{2493984 (2009k:68100)}},
   doi={10.1016/j.tcs.2008.09.045},
}

\bib{Lutz}{article}{
   author={Lutz, Jack H.},
   title={Gales and the constructive dimension of individual sequences},
   conference={
      title={Automata, languages and programming},
      address={Geneva},
      date={2000},
   },
   book={
      series={Lecture Notes in Comput. Sci.},
      volume={1853},
      publisher={Springer},
      place={Berlin},
   },
   date={2000},
   pages={902--913},
   review={\MR{1795945 (2001g:68046)}},
}

\bib{Lyons}{article}
	{
	author={Lyons, Russell},
	title={Random walks and percolation on trees},
	journal={annprob},
	volume={18},
	date={1990},
	number={3},
	pages={931--958},
	issn={0091-1798},
	review={\MR{1062053 (91i:60179)}},
	}

\bib{Mattila}{book}
	{
	author={Mattila, Pertti},
	title={Geometry of sets and measures in Euclidean spaces},
	series={Cambridge Studies in Advanced Mathematics},
	volume={44},
	note={Fractals and rectifiability},
	publisher={Cambridge University Press},
	place={Cambridge},
	date={1995},
	pages={xii+343},
	isbn={0-521-46576-1},
	isbn={0-521-65595-1},
	review={\MR{1333890 (96h:28006)}},
	}

\bib{MP}{book}
	{
	AUTHOR={M{\"{o}}rters, Peter},
	AUTHOR={Peres, Yuval},
	TITLE={Brownian Motion},
	NOTE={Draft version of May 25, 2008. \url{http://www.stat.berkeley.edu/~peres/}}
	}

\bib{JanPhD}{thesis}{
	author={Reimann, Jan},
	title={Computability and Fractal Dimension},
	type={doctoral dissertation},
	institution={Universit\"at Heidelberg},
	year={2004},
}

\bib{Reimann}{article}{
   author={Reimann, Jan},
   title={Effectively closed sets of measures and randomness},
   journal={Ann. Pure Appl. Logic},
   volume={156},
   date={2008},
   number={1},
   pages={170--182},
   issn={0168-0072},
   review={\MR{2474448 (2010a:03043)}},
   doi={10.1016/j.apal.2008.06.015},
}

\bib{RS01}{article}{
   author={Reimann, Jan},
   author={Stephan, Frank},
   title={Effective Hausdorff dimension},
   conference={
      title={Logic Colloquium '01},
   },
   book={
      series={Lect. Notes Log.},
      volume={20},
      publisher={Assoc. Symbol. Logic},
      place={Urbana, IL},
   },
   date={2005},
   pages={369--385},
   review={\MR{2143904 (2006b:03052)}},
}

\bib{Uspensky_Shen}{article}{
   author={Uspensky, V. A.},
   author={Shen, A.},
   title={Relations between varieties of Kolmogorov complexities},
   journal={Math. Systems Theory},
   volume={29},
   date={1996},
   number={3},
   pages={271--292},
   issn={0025-5661},
   review={\MR{1374498 (97c:68074)}},
   doi={10.1007/BF01201280},
}

 \bib{Levin}{article}{
   author={Zvonkin, A. K.},
   author={Levin, L. A.},
   title={The complexity of finite objects and the basing of the concepts of
   information and randomness on the theory of algorithms},
   language={Russian},
   journal={Uspehi Mat. Nauk},
   volume={25},
   date={1970},
   number={6(156)},
   pages={85--127},
   issn={0042-1316},
   review={\MR{0307889 (46 \#7004)}},
}

\end{biblist}
\end{bibdiv}
\end{document}